\documentclass{amsart}
\usepackage{amsmath}
\usepackage{amssymb}
\usepackage[dvipdfmx]{graphicx}
\usepackage{amscd}
\usepackage{amsthm}
\usepackage{fullpage}
\usepackage{mathrsfs}
\usepackage{setspace}
\usepackage{mathtools}
\usepackage{xcolor}
\usepackage[all]{xy}
\usepackage{url}
\usepackage{bbm}
\usepackage[dvipdfmx, colorlinks=true, pagebackref]{hyperref}
\usepackage[lite,abbrev,alphabetic]{amsrefs}

\usepackage{autobreak}

\theoremstyle{definition}
\newtheorem{dfn}{Definition}
\newtheorem{thm}[dfn]{Theorem}
\newtheorem{lem}[dfn]{Lemma}
\newtheorem{prop}[dfn]{Proposition}
\newtheorem{rem}[dfn]{Remark}

\numberwithin{equation}{section}

%Numbers
\def\N{\mathbb{N}}
\def\C{\mathbb{C}}% complex nember field
\def\R{\mathbb{R}}% real number field
\def\Q{\mathbb{Q}}% rational number field
\def\Z{\mathbb{Z}}% ring of whole numbers
\def\A{\mathbb{A}}% the ring of adele
% finite field
% Hamilton's quaternion field

%Algebraic groups
\def\GL{\mathrm{GL}}

\def\SL{\mathrm{SL}}

\def\SO{\mathrm{SO}}

\def\t{\,{}^t}% transpose
\def\d{\,\mathrm{d}}
% anti-diagonal dots
% backslash

%Simplification
\def\bsl{\backslash}
\def\inf{\infty}

%for this paper

\def\fd{{\mathfrak{d}}}

\def\fo{\mathfrak{o}}

\def\fD{{\mathfrak{D}}}

\def\fT{{\mathfrak{T}}}

\newcommand{\cA}{\mathcal{A}}

\newcommand{\cF}{\mathcal{F}}

\newcommand{\cH}{\mathcal{H}}

\newcommand{\cL}{\mathcal{L}}

\newcommand{\cN}{\mathcal{N}}
\newcommand{\cO}{\mathcal{O}}

\newcommand{\cT}{\mathcal{T}}

 % diagonal matrix
% volume

\DeclareMathOperator{\Gal}{Gal}% Galois group
% residue / scalar restriction 

% discriminant
% level
% the sign character
% real part
% imaginary part
\DeclareMathOperator{\Nm}{Nm}% norm
\DeclareMathOperator{\Tr}{Tr}% trace

\def\bb{\mathbf{b}}
\def\ba{\mathbf{a}}

\def\sF{\mathscr{F}}

\def\Re{\mathop{\mathrm{Re}}}
\def\Im{\mathop{\mathrm{Im}}}
\def\Tr{\mathop{\mathrm{Tr}}}
\def\Nm{\mathop{\mathrm{Nm}}}

\def\Gal{\mathop{\mathrm{Gal}}}

\def\new{\mathrm{new}}

\def\eps{\varepsilon}

\def\ul{\underline{l}}
\def\fF{\mathfrak{F}}

\def\rv{\Q^{(3)}}

\newcommand{\HG}[1]{\mathcal{H}_{#1}^{\Gamma}}

\newcommand{\eH}[2]{\cH_{[#1,#2]}}
\newcommand{\modF}[2]{F_{[#1,#2]}}

\begin{document}

\title{Computations of algebraic modular forms associated with the definite quaternion algebra of discriminant $2$}
%on the $D_4$ root lattice

\author{Hiroyuki Ochiai}
\author{Satoshi Wakatsuki}
\author{Shun'ichi Yokoyama}

\address{Hiroyuki Ochiai \\
Institute of Mathematics for Industry, Kyushu University\\
744, Motooka, Nishi-ku, Fukuoka, 819-0395, Japan}
\email{ochiai@imi.kyushu-u.ac.jp}

\address{Satoshi Wakatsuki \\
Faculty of Mathematics and Physics, Institute of Science and Engineering\\
Kanazawa University\\
Kakumamachi, Kanazawa, Ishikawa, 920-1192, Japan}
\email{wakatsuk@staff.kanazawa-u.ac.jp}

\address{Shun'ichi Yokoyama \\
Department of Mathematical Sciences\\
Graduate School of Science, Tokyo Metropolitan University\\
1-1 Minami-Osawa, Hachioji-shi, Tokyo, 192-0397, Japan}
\email{s-yokoyama@tmu.ac.jp}

\date{\today}

\begin{abstract}
In this paper, we present an algorithm to compute a basis of the space of algebraic modular forms on the maximal order of the definite quaternion algebra of discriminant $2$, and provide a database of such bases. 
A main application of our database is to obtain congruence relations of algebraic modular forms, which lead non-vanishing theorems for prime twists of modular $L$-functions. 
\end{abstract}

\maketitle

\setcounter{tocdepth}{1}

\tableofcontents

%%%%%   General notation   %%%%%

\section{Introduction}

The importance of algebraic modular forms on definite quaternion algebras lies in their applications to the study of holomorphic modular forms via theta series. The two main applications are as follows. 
    \begin{itemize}
        \item[(A)] Modular forms of integral weights and Basis problem. See e.g. \cites{Eic73, Shi72}. 
        \item[(B)] Modular forms of half-integral weights and Toric periods. See e.g. \cites{Wal81, Wal85, Wal91}.
    \end{itemize}
The motivation of this paper lies in (B). 
Usually, when we talk about the computation of algebraic modular forms on definite quaternion algebras, we often assume the computation of Brandt matrices in connection with (A), see e.g. \cite{Piz80}. 
In fact, the Brandt matrix algorithm is well established, and the function to compute the Brandt matrix is incorporated in \textsf{Magma} \cite{BCP97}. 
On the other hand, as far as (B) is concerned, it seems that the algorithm for computing the toric periods and the theta series for algebraic modular forms has not been established. 
%その一方で, (B)に関して言えば, 代数的保型形式に対する(B)に関連したトーラス周期や半整数ウェイトのテータ級数の計算アルゴリズムは確立していないようである. 
%（ここでは(A)と(B)の動機の差についてのみ言及）

%$\SO(3)$の表現と調和多項式の間をつなげる文章を書く（二つの言葉に差がある）。
%代数的保型形式の概念はGrossによって一般的に定式化されました. 
%A general formulation for algebraic modular forms is given in  

The relationship between quaternion algebras and modular forms has been clarified in various ways, including what I mentioned above, but apart from that, Ihara \cite{Iha64} and Ibukiyama \cite{Ibu84} developed algebraic modular forms for algebraic groups of higher dimension.
After those, the general notion of an algebraic modular form was formulated by Gross \cite{Gro99}. 
If we restrict our attention to the special case (the class number $1$) in his notion, 
%For simplicity, let us explain only algebraic modular forms for an open compact subgroup of class number $1$ in the adelic group of a semisimple algebraic group. 
for a compact Lie group $G$ and a discrete subgroup $\Gamma$ of $G$, a space of algebraic modular forms is defined by the $\Gamma$-invariant subspace of a representation space of a finite dimensional representation of $G$. 
Hence, algebraic modular forms on definite quaternion algebras are defined as vectors in a representation space of $\mathrm{SU}(2)$. 
Therefore, it is important to choose a suitable realization of the representation space according to the purpose. 
In the research related to (B), it is preferable to treat the representation of $\mathrm{SO}(3)\cong \mathrm{SU}(2)/\{\pm 1\}$ from the viewpoint of theta correspondence. 
In addition, for the application of our motivation to toric periods, which is discussed below, it is necessary to choose the space of homogeneous harmonic polynomials of $3$ variables as a realization of the representation space of $\SO(3)$, and explicitly describe algebraic modular forms as Ihara \cite{Iha64} and Ibukiyama \cite{Ibu84} did. 
In this paper, we treat the Hurwitz quaternion order of the definite quaternion algebra of discriminant $2$, and present an algorithm for computing bases of spaces of algebraic modular forms as harmonic polynomials. 
Naturally, our algorithm can be applied to other orders of class number $1$ as well, and we plan to generalize it to the case where the class number is greater than $1$ (we already treated such the case in \cite{SWY22} for computations of toric periods).

\if0
Let us choose a semisimple algebraic group $H$ over $\Q$ and an open compact subgroup $K$ of $H(\A)$, where $\A$ is the ad\'ele ring of $\Q$, and assume that a semisimple Lie group $G\coloneqq H(\R)$ is compact. 
In addition, let us choose a finite-dimensional representation of $H$ and its representation space $W$ over $\Q$. 
The class number $h$ of $G$ and $K$ is defined by $h=\#(H(\Q)\bsl H(\A)/H(\R)K)$. 
Then there are finite discrete subgroup $\Gamma_j$ $(1\le j\le h)$ of $G$, and a space of algebraic modular forms is defined by the direct sum of $\Gamma_j$-invariant subspaces of $W$ $(1\le j\le h)$. 
Since there are various realizations of $W$, there are similarly various realizations of the space of algebraic modular forms. 
When we consider an order of the class number $1$ (i.e. $h=1$), algebraic modular forms can be regarded as harmonic polynomials which are invariant under the action of the unit group (cf. \S\ref{sec:1}). 
\fi

Our main motivation is the study of non-vanishing theorems of central values of prime twists of modular $L$-functions in \cite{CW23}. 
In two examples, Chida and the second author proved that, for every quadratic field with a prime discriminant, the corresponding toric period does not vanish, that is, we obtain the non-vanishing of the central value of the corresponding $L$-function if its root number is positive. 
These are interesting examples for the conjecture of \cite{CKRS02}, and we would like to study the behavior of the central values for prime twists in more examples to refine their conjecture (see \cite{CW23} for the detail). 
The key point of the proof for the non-vanishing of the toric period in \cite{CW23} is to define the integral structure of algebraic modular forms along the lattice introduced in \cite{Gro87} and to consider congruence relations of values of algebraic modular forms at the lattice points. 
Note that the explicit description of algebraic modular forms as harmonic polynomials is essential to obtain the congruence relations of algebraic modular forms. 
In this paper, we show that a lot of algebraic modular forms obtained by running our algorithm with \textsf{Magma} \cite{BCP97} satisfy similar congruence relations, and hence we obtain many examples of the non-vanishing theorem for prime twists of modular $L$-functions (cf. \S\ref{sec:4}). 
This fact suggests that the congruence relation satisfied by the algebraic modular forms is not coincidence, and we expect to see some implications in the future.

The obtained algorithm gives a database of bases of spaces of algebraic modular forms. 
See Appendices~\ref{sec:app}~and~\ref{sec:app2} and our website \cite{OWY24} for the database. 
As a new fact that we are able to observe from the database, we discuss a multiplicative property of algebraic modular forms in \S\ref{sec:fac} and \S\ref{sec:refine}. 
This remarkable property may expect a connection to the study of the Waldspurger lifts of algebraic modular forms of odd degrees. 
It is interesting to study the explicit relations between the Fourier coefficients of the Waldspurger lifts of algebraic modular forms and the central values of the $L$-functions, see \cites{Gro87, ST18}. 
Since the study for odd degrees has not been established so far, we plan to study the Waldspurger lifts by using the multiplicative property, and conduct concrete numerical experiments based on the database in the future.

This paper is organized as follows.
In \S \ref{sec:1}, we review the basic facts of the theory of algebraic modular forms on quaternion algebra of discriminant $2$.  
In \S \ref{sec:2}, we prove dimension formulas for the spaces of algebraic modular forms. 
They are known and unnecessary for our algorithm in \S\ref{sec:3}, but the discussion allows us to know what the relationship is between the dimension and the information of the integer ring. 
The dimension formulas are used for the factorization problems in \S\ref{sec:fac}.
In \S \ref{sec:3}, we give an algorithm for computing a basis of the space of algebraic modular forms. This is the main result of this paper.
In \S \ref{sec:4}, based on the explicit expressions of algebraic modular forms obtained from the algorithm given in \S \ref{sec:3}, we give many examples in which we have congruence relations of algebraic modular forms at CM points and the non-vanishing of the central values of prime twists of $L$-functions. 
In \S \ref{sec:fac}, we discuss some factorization problems using our computational results. 
In \S \ref{sec:refine}, we discuss a simpler expression by using elementary symmetric polynomials.
In Appendix \ref{sec:app}, we describe basis obtained from our algorithm for degrees below $13$.
In Appendix \ref{sec:app2}, we list the simple expressions for the algebraic modular forms in Appendix \ref{sec:app}.

\medskip

\noindent
\textbf{Acknowledgments.} 
The authors thank Masataka Chida for his advice on congruence relations between algebraic modular forms and norms at CM points and allowing us to write it in this paper. 
The first author is partially supported by JSPS 
Grant-in-Aid for Transformative Research Areas (A) No.22H05107.
The second author is partially supported by JSPS Grant-in-Aid for Scientific Research (C) No.20K03565, (B) No.21H00972. 
The third author is supported by JSPS Grant-in-Aid for Scientific Research (C) No.20K03537.

\if0

定値四元数体の代数的保型形式の重要性はテータ級数を通じた正則モジュラー形式の研究への応用にある. 主な応用として次の二つが挙げられる. 
    \begin{itemize}
        \item[(A)] Modular forms of integral weights and Basis Problem. 例えば Eichler \cite{Eic73}, Shimizu \cite{Shi72} を参照. 
        \item[(B)] Modular forms of half-integral weights and Toric periods. Waldspurgerの一連の論文 \cites{Wal81, Wal85, Wal91} などを参照.
    \end{itemize}
本論文のモチベーションは (B) にある. 
通常, 定値四元数体の代数的保型形式の計算というと, (A) に関連して Brandt matrix の計算を想定する場合が多いと思われる, see e.g. \cite{Piz80}. 
実際, Brandt matrix のアルゴリズムは確立しており, 我々が使用する \textsf{Magma} \cite{BCP97} にも Brandt matrix を計算する関数が組み込まれている. 
以下で説明するように, (B) に関連する我々の目的には Brandt matrix の計算は必要とせず, 論文\cites{Iha64, Ibu84}のように, 代数的保型形式を多項式として実現し, 明示的に記述することを必要としている.

代数的保型形式は\cite{Gro99}において一般的な定式化が与えられている. 
$\Q$上の簡約代数群$H$とそのアデール点での開コンパクト部分群$K$を選ぼう. 
そして, $H$と$K$から定まる類数を$h$と書き, $H$の$\R$-有理点の集合$G$がコンパクトであることを仮定する. 
さらに, $Q$上定義される$H$の有限次元表現とその表現空間$W$を固定しよう. 
このとき, $G$の有限離散部分群$\Gamma_j$ $(1\le j\le h)$が存在し, $h$個ある$W$の$\Gamma_j$-不変部分空間の直和として, 代数的保型形式の空間が定義される. 
表現空間には様々な実現があるため、代数的保型形式の空間にも同様に様々な実現が存在する. 
したがって, 目的に応じた代数的保型形式の空間の実現が重要となる. 
四元数体上の代数的保型形式については$\mathrm{SU}(2)$の表現空間として実現することになるが, (B)に関連する研究ではテータ対応の観点から群$\mathrm{SO}(3)\cong \mathrm{SU}(2)/\{\pm 1\}$の表現を扱うことが好ましいので, 本論文では表現空間として$3$変数の斉次調和多項式の空間を選択する. 
When we consider an order of the class number $1$ (i.e. $h=1$), algebraic modular forms can be regarded as the same as the harmonic polynomials which are invariant under the action of the unit group (cf. \S\ref{sec:1}). 
In this paper, we treat the Hurwitz quaternion order (the $D_4$ root lattice, the class number is $1$) of the definite quaternion algebra of discriminant $2$, and present an algorithm for computing bases of spaces of algebraic modular forms.

我々の主要な動機は論文\cite{CW23}で行われたモジュラー$L$関数の素数捻りの中心値の非消滅の研究にある. 
イプシロン因子が正になるような素数判別式を持つ$2$次体すべてについて対応するトーラス周期の非消滅, つまりモジュラー$L$関数の素数捻りの中心値の非消滅を２つの例で証明した. 
\cite{CW23}におけるトーラス周期の非消滅の証明のポイントは, \cite{Gro87}で導入された格子にそって代数的保型形式の整構造を定め, 代数的保型形式を明示的に多項式で記述することによって格子点における代数的保型形式の値の合同関係式を考察することであった. 
We show that a lot of algebraic modular forms obtained by running our algorithm with \textsf{Magma} \cite{BCP97} satisfy similar congruence relations, and hence we obtain many examples of the non-vanishing theorem for prime twists of modular $L$-functions (cf. \S\ref{sec:4}). 
この事実は代数的保型形式が満たす合同関係式は偶然によるものではないことを示唆しており, 何らかの意味付けが期待される.

もちろん, 得られたアルゴリズムによって代数的保型形式の空間の基底のデータベースを与える. 
See Appendices~\ref{sec:app}~and~\ref{sec:app2} and our website \cite{OWY24} for the database. 
Naturally, this algorithm can be applied to other orders of class number $1$ as well, and we plan to generalize it to the case where the class number is greater than $1$ (we already treated such the case in \cite{SWY22} for computations of toric periods). 
またデータベースから観察される代数的保型形式の乗法的な性質については \S\ref{sec:fac} と \S\ref{sec:refine}において議論を行う. 
(B)の研究としては半整数ウェイトのモジュラー形式とも密接に関係しており, 代数的保型形式の Waldspurger lift のフーリエ係数と$L$関数の中心値の明示的な関係式 (Waldspurger formula) の研究も重要である. 
論文\cite{Gro87}においてその関係式が証明され, その後, 様々な研究が存在するが, 偶数次の代数的保型形式に対しては\cite{ST18}において一般的に証明されている. 
奇数次の代数的保型形式の Waldspurger lift に関しては構成方法が確立していないようなので, 本論文のデータベースを基にして具体的な数値実験を今後に行う予定である.

%\cite{CW23}および本論文の代数的保型形式の定義の流儀がどの先行研究をベースにしているのか疑問に感じる人もいると思われるが、一般論 \cite{Gro99} の枠組み内に収まっているので問題は表現の実現のみであり、上述の\cites{Iha64, Ibu84, Gro87, ST18}などを参考にして代数的保型形式の値が意味を成すように今回の流儀を設定するに至った。 

Algebraic modular forms for definite quaternion algebras have played an important role in the study of holomorphic modular forms (see, e.g., Eichler \cite{Eic73}, Shimizu \cite{Shi72}). 
It is known that algebraic modular forms, as the name implies, are defined algebraically by ideal classes and harmonic polynomials, which makes them compatible with computers. 
Although there seems to have been no research to specifically compute harmonic polynomials, the explicit form of polynomials was used to study the non-vanishing theorem of the central values of modular $L$-function in the study of \cite{CW23}, which confirms its importance. 
When we consider an order of the class number $1$, they can be regarded as the same as the harmonic polynomials which are invariant under the action of the unit group, and thus they are easier to handle by the computer.
In this paper, we treat the Hurwitz quaternion order (the $D_4$ root lattice), give an algorithm for computing bases of spaces of algebraic modular forms, and provide a database of such bases. See Appendices~\ref{sec:app}~and~\ref{sec:app2} and our website \cite{OWY24} for the database. 
Naturally, this algorithm can be applied to other order of class number $1$ as well, and we plan to generalize it to the case where the class number is greater than $1$ (we already treated such the case in \cite{SWY22} for computations of toric periods). 
If the explicit forms of the polynomials are known as the basis of the space, it is also possible to compute the eigenvalues of the Hecke operators, and numerical and experimental studies of its properties (especially as related to the Maeda's conjecture) can be performed. 

Our main motivation is to study congruence relations of algebraic modular forms. 
When the integer ring of an imaginary quadratic field can be embedded into the order, we found a congruence relation between its discriminant and the value of an algebraic modular form of degree $3$ at its CM point in \cite{CW23}.
This congruence relation is related to the non-vanishing of the toric period, that is, the non-vanishing of the central value of the L-function, which is an interesting property.
In the present study, we find that an algebraic modular form of degree $4$ obtained by running our algorithm with \textsf{Magma} \cite{BCP97} also satisfies the congruence relation.
\medskip

\fi

\section{Setup}\label{sec:1}

\subsection{Quaternion algebra \texorpdfstring{$D$}{TEXT}}

A quaternion algebra $D$ over $\Q$ is defined by
\[
D=\Q+\Q\, i+\Q\, j+\Q\, ij, \quad \quad i^2=j^2=-1, \quad ij=-ji.
\]
This algebra $D$ is written as $\left(\frac{-1,-1}{\Q}\right)$, and its discriminant is $2$, see \cite{Voi21}. 
A $\Q$-involution $\iota$ on $D$ is defined as
\[
\iota(a_0+a_1i+a_2j+a_3ij)=a_0-a_1i-a_2j-a_3ij \quad (a_0,a_1,a_2,a_3\in \Q).
\]
We also set
\[
\Tr(x)=x+\iota(x),\quad \Nm(x)=x\, \iota(x) \quad (x\in D).
\]
It is known that $D$ is division, and the inverse of $x(\neq 0)$ is $\Nm(x)^{-1} \iota(x)$. 
The $\R$-algebra $D\otimes_\Q \R$ is just the Hamiltonian's quaternion algebra.

\subsection{Maximal order \texorpdfstring{$\cO$}{TEXT}}

A maximal order $\cO$ in $D$ is given by
\[
\cO=\Z+\Z i + \Z j + \Z w , \quad w=\frac{1+i+j+ij}{2} .
\]
The elements in $\cO$ are called the Hurwitz integral quaternions, cf. \cite{CS03}*{\S5}, and this lattice $\cO$ is known to be the $D_4$ root lattice for the bilinear form $\langle x,y\rangle=\frac12(\Nm(x+y)-\Nm(x)-\Nm(y))$. 
The group $\cO^\times$ is generated as 
\[
\cO^\times = \langle i,\;\; j, \;\; w  \rangle.
\]
Note that we have $i^2=j^2=w^3=-1$ and $\#\cO^\times=24$.

\subsection{Special orthogonal group \texorpdfstring{$\SO(Q)$}{SO(Q)}}\label{sec:Q}

A vector subspace $V_D$ of $D$ over $\Q$ is defined by 
\[
V_D\coloneqq \{ x\in D \mid \Tr(x)=0\}.  
\]
Take a basis $\bb_1$, $\bb_2$, $\bb_3$ of $V_D$ as
\[
\bb_1=-i+j+ij, \qquad \bb_2=i-j+ij, \qquad \bb_3=i+j-ij. 
\]
Let $\rv$ denote the vector space consisting of row vectors of degree $3$ over $\Q$.  
We identify $V_D$ with $\rv$ by
\[
\cT(a_1\bb_1+a_2\bb_2+a_3\bb_3)=\begin{pmatrix} a_1  & a_2 & a_3 \end{pmatrix} \in \rv.
\]
Set
\[
Q\coloneqq\begin{pmatrix} 3&-1&-1 \\ -1&3&-1 \\ -1&-1&3 \end{pmatrix} , \quad \text{and then} \quad  Q^{-1}=\frac{1}{4}\begin{pmatrix} 2&1&1 \\ 1&2&1 \\ 1&1&2 \end{pmatrix} . 
\]
When $x\in V_D$ and $\ba=\cT(x)\in \rv$, we have $\Nm(x)=\ba Q \t\ba$.

The special orthogonal group $\SO(Q)$ is defined as $\SO(Q)\coloneqq \{g\in \SL_3(\Q) \mid g Q \t g=Q\}$. 
The group $D^\times$ acts on $V_D$ as $x\cdot g =g^{-1}xg$ $(x\in V_D$, $g\in G)$. 
Then, we get a homomorphism $\cF : D^\times \to \SO(Q)$ such that
\[
\cT(x\cdot g)=\cT(x)\, \cF(g) \quad (x\in V_D, \;\; g\in D^\times). 
\]
By $\mathrm{Ker}(\cF)=\Q^\times$, we obtain the isomorphism $\Q^\times\bsl D^\times\cong \SO(Q)$.

%A homomorphism $\bR\colon D^\times\to \SO(Q)$ is defined by\[\bT(x)\bR(g)=\bT(g^{-1}xg) \quad (g\in D^\times).\]

\subsection{Harmonic polynomials}

For the variables $x_1$, $x_2$, $x_3$, we set
\[
\cA_l \coloneqq \{ f(x_1,x_2,x_3)\in\Q[x_1,x_2,x_3] \mid \text{$\deg f=l$ and $f$ is homogeneous}   \}.
\]
Note that $\dim \cA_l =\frac{(l+2)(l+1)}{2}$.
Each polynomial $f(x_1,x_2,x_3)$ in $\cA_l$ is identified with the polynomial function $V_D\ni a \mapsto f\circ\cT(a) \in \Q$. 
The Laplace operator $\Delta_Q$ for $Q$ is defined by $\Delta_Q= \sum_{1\le i,j\le 3} q'_{ij} \frac{\partial^2}{\partial x_i \partial x_j}$ where $Q^{-1}=(q_{ij}')$, hence it is explicitly described as
\[
2\Delta_Q=\frac{\partial^2}{\partial x_1^2}+\frac{\partial^2}{\partial x_2^2}+\frac{\partial^2}{\partial x_3^2}+\frac{\partial^2}{\partial x_1\, \partial x_2}+\frac{\partial^2}{\partial x_2\, \partial x_3}+\frac{\partial^2}{\partial x_3\, \partial x_1} .
\]
A vector space $\cH_l$ of harmonic polynomials is defined by
\[
\cH_l \coloneqq \{ f\in \cA_l \mid \Delta_Q f=0  \}. 
\]

An action of $\SO(Q)$ on $\cA_l$ is given by $(g\cdot f)(x)=f(xg)$ $(g\in \SO(Q)$, $f\in \cA_l)$, where $x=(x_1,x_2,x_3)$ is regarded as a row vector of degree $3$ and $xg$ means the usual product of matrices. 
Since the operator $\Delta_Q$ and the action of $\SO(Q)$ on $\cA_l$ are commutative, we obtain a $\Q$-rational representation $\rho_l\colon\SO(Q)\to \GL(\cH_l)$. 
It is known that $\rho_l$ is irreducible, and we have $\dim_\Q \cH_l=2l+1$. 
A Lie group $G$ is defined by
\[
G\coloneqq \{g \in \SL_3(\R) \mid gQ\t g=Q\},
\]
which is a compact special orthogonal group of degree $3$. 
We extend $\rho_l$ to a representation $G\to\GL(\cH_l\otimes\C)$. 
Let $\Theta_l(g)\coloneqq \Tr\rho_l(g)$ $(g\in G)$. 
The function $\Theta_l$ on $G$ is called the character function of $\rho_l$. 
It is known that we have
\begin{equation}\label{eq:char1}
\Theta_l(\gamma_\theta)=\frac{e^{i (2l+1) \theta/2}-e^{-i (2l+1)\theta/2}}{e^{i\theta/2}-e^{-i\theta/2}}=\sum_{m=0}^{2l} e^{i\theta (l-m)}
\end{equation}
if $\gamma_\theta$ is a semisimple element whose eigenvalues are $e^{i\theta}$, $1$, and $e^{-i\theta}$. 
Note that $\Theta_l$ is invariant, that is, 
\begin{equation}\label{eq:inv}
\Theta_l(g^{-1}h g)=\Theta_l(h)
\end{equation}
for arbitrary $g$, $h\in G$. 

Let $S_Q\coloneqq \{ x\in \R^{(3)} \mid xQ\t x=1 \}$, where $\R^{(3)}\coloneqq\rv\otimes_\Q \R$. 
Take a $G$-invariant measure $\d x$ on $S_Q$, and normalize it as $\int_{S_Q} \d x=1$. 
Define an inner product $\langle \; , \; \rangle$ on $\cA_l\otimes\C$ by
\begin{equation}\label{eq:inner}
\langle f_1,f_2 \rangle = \int_{S_Q} f_1(x) \overline{f_2(x)} \, \d x \quad (f_1,f_2\in \cA_l).     
\end{equation}
%Then, $\cA_l\otimes\C$ is a Hilbert space for $\langle \; , \; \rangle$.
The representation $\rho_l$ of $G$ on $\cH_l\otimes\C$ is unitary for $\langle \; , \; \rangle$. 
Note that each function $f$ in $\cH_l\otimes\C$ is identified with the restriction of $f$ to $S_Q$.

\subsection{Kernel function}

%The Legendre polynomial is a special case of the Gegenbauer polynomial.
The Legendre polynomial $L_l(t)$ is defined by the following recursion relation:
\[
L_0(t)=1,\quad  L_1(t)=t , \quad L_l(t)=\frac{2l-1}{l} \, t\, L_{l-1}(t)-\frac{l-1}{l}L_{l-2}(t).
\]
For example, $L_2(t)=\frac{3}{2}t^2-\frac{1}{2}$ and $L_3(t) = \frac{5}{2}t^3-\frac{3}{2}t$.
A polynomial $K_l(x,y)$ of six variables $(x,y)$ is defined by
\begin{equation}\label{eq:KL}
K_l(x,y)=\Nm(x)^{l/2}\, \Nm(y)^{l/2}\, L_l(x' Q \, \t y'), \quad x'=\Nm(x)^{-1/2}x, \;\; y'=\Nm(y)^{-1/2}y.    
\end{equation}
Then, it is known that $K_l(x,y)$ is the reproducing kernel on $\cH_l\otimes\C$, that is,
\[
\int_{S_Q}K_l(x,y) \, f(y) \, \d y=f(x) \qquad (f\in \cH_l\otimes \C), 
\]
cf. \cite{Far08}.
Since $L_l(t)$ is a $\Q$-polynomial, the polynomial $K_l(x,y)$ of $x$ belongs to $\cH_l$ for a fixed point $y\in\rv$.

Take a point $a\in S_Q$, and set $G_a\coloneqq \{ g\in G \mid a\, g=a \}$. 
Take a Haar measure $\d g$ (resp. $\d g_a$) on $G$ (resp. $G_a$), and normalize them so that the quotient measure $\d g/\d g_a$ agrees with $\d x$ on $S_Q$ by $x=a\, g^{-1}$. 
Furthermore, the Hilbert space $\cH_l\otimes\C$ is identified with a subspace of $L^2(G/G_a)$ by the mapping
\[
\cH_l\otimes\C \ni f \longrightarrow [g\mapsto f(a \, g^{-1})] \in L^2(G/G_a).
\]
Hence, the function $[(g,h)\mapsto K(a\, g^{-1},a\, h^{-1})]$ belongs to $L^2(G/G_a\times G/G_a)$.
Using $\dim\mathrm{Hom}_{G_a}(\rho_l,\mathbbm{1})=1$, the Peter-Weyl theorem on $L^2(G)$, and the uniqueness of the kernel function, we obatin
\begin{equation}\label{eq:char2}
K_l(a\, g^{-1} ,a\, h^{-1})= \Theta_l(gh^{-1}) \quad (g,h\in G).
\end{equation}

\subsection{Algebraic modular forms}

A lattice $L(\cO)$ in $V_D$ is defined by
\[
L(\cO) \coloneqq \Z \bb_1 +\Z \bb_2 + \Z \bb_3. 
\]
Let $\Gamma\coloneqq \cF(\cO^\times)$. 
Then, $L(\cO)$ is stable for the action of $\Gamma$, since $L(\cO)=\{ x\in\Z+2\cO \mid \Tr(x)=0\}$.
Hence, $\Gamma$ is a finite subgroup of $\SO(Q)\cap \SL_3(\Z)$. 
Furthermore, the generators 
\[
\mathfrak{i}=\cF(i)=\begin{pmatrix} -1&-1&-1 \\ 0&0&1 \\ 0&1&0 \end{pmatrix}, \quad
\mathfrak{j}=\cF(j)=\begin{pmatrix} 0&0&1 \\ -1&-1&-1 \\ 1&0&0 \end{pmatrix}, \quad
\mathfrak{w}=\cF(w)=\begin{pmatrix} 0&0&1 \\ 1&0&0 \\ 0&1&0 \end{pmatrix}.
\]
of $\Gamma$ satisfy the relations
\[
\mathfrak{i}^2=\mathfrak{j}^2=\mathfrak{w}^3=1,\quad \mathfrak{i}\, \mathfrak{j}=\mathfrak{j} \, \mathfrak{i} ,\quad \mathfrak{i}\, \mathfrak{w}=\mathfrak{w} \, \mathfrak{j},\quad \mathfrak{w}\, \mathfrak{i}=\mathfrak{i}\, \mathfrak{j} \, \mathfrak{w}.
\]
Therefore, $\Gamma$ is isomorphic to the alternating group of degree $4$. 
A subspace $\cH_l^\Gamma$ of $\cH_l$ is defined by
\[
\cH_l^\Gamma\coloneqq \{ f\in \cH_l \mid f(x\cdot \gamma)=f(x) \;\; (\forall \gamma\in \Gamma) \}.
\]
The space $\cH_l^\Gamma\otimes \C$ is equipped with the inner product $\langle \; ,\; \rangle$, which was defined in \eqref{eq:inner}. 
A function in $\cH_l^\Gamma$ or $\cH_l^\Gamma\otimes \C$ is called an algebraic modular form of degree $l$ for $\cO$. 
%It is known that $\cH_l^\Gamma\otimes \C$ is isomorphic to the space of holomorphic newforms of weight $2l+2$ and level $2$ as Hecke module. %the level of algebraic modular forms in $\cH_l^\Gamma$ should be regarded as $2$. 

\subsection{Hecke operator and Atkin-Lehner involution}

For a prime $p$, set
\[
\fT_p=\{\cF(x)\in \SO(Q) \mid x\in \cO,\;\; \Nm(x)= p \} .
\]
Since $\fT_p$ is bi-$\Gamma$-invariant and a finite subset of $\SO(Q)$, a linear operator $T_p$ on $\cH_l^\Gamma$ is defined by
\begin{equation*}%\label{eq:op}
    (T_p f)(x)=\sum_{\gamma\in \Gamma\bsl \fT_p} f(x \gamma^{-1}) \quad (f\in \cH_l^\Gamma, \;\; x\in\rv).
\end{equation*}
The opeator $T_p$ commutes with $\Delta_Q$ and $T_q$ (any primes $q\neq p$).
If we fix a $\Q$-basis in $\cH_l^\Gamma$, then $T_p$ is regarded as a square $\Q$-matrix of degree $d$ $(d=\dim \cH_l^\Gamma)$, which is called the Brandt matrix of $T_p$.
The operator $T_p$ is extended to a linear operator on $\cH_l^\Gamma\otimes \C$. 
It is known that the eigenvalues of $T_p$ are in $\R$ and we can simultaneously diagonalize the $T_p$'s for all primes $p$. 
If $f$ is a simultaneous eigenfunction, then $f$ is said to be a Hecke eigenform.

For the case $p=2$, the Hecke operator $T_2$ is a $\Q$-involution on $\cH_l^\Gamma$.
It is identified with the Atkin-Lehner involution for the space of holomorphic newforms.
Set
\[
\cH_{l,+}^\Gamma\coloneqq \{ f\in \cH_l^\Gamma \mid T_2f=(-1)^l f \}, \quad \cH_{l,-}^\Gamma\coloneqq \{ f\in \cH_l^\Gamma \mid T_2f=-(-1)^l f \}.
\]
We have the direct sum
\begin{equation}\label{eq:pmfo}
\cH_{l}^\Gamma=\cH_{l,+}^\Gamma\oplus \cH_{l,-}^\Gamma. 
\end{equation}

\subsection{$L$-function and Quadratic twist}

Take a Hecke eigenform $f\in \cH_{l,\epsilon }^\Gamma\otimes\C$, where $\epsilon=+$ or $-$. 
Denote by $\lambda_{f,p}$ the eigenvalue of $f$ for $T_p$. 
Define the local $L$-factors of $f$ by  
\begin{align*}
L_\infty(s,f) \coloneqq  & 2\, (2\pi)^{-s-l-\tfrac{1}{2}}\, \Gamma\left(s+l+\tfrac{1}{2}\right) \qquad \text{($\Gamma(s)$ is the gamma function)}, \\
L_2(s,f)\coloneqq & (1- \lambda_{f,2} \, 2^{-s})^{-1} \qquad (\lambda_{f,2}=\epsilon (-1)^l),  \\
L_p(s,f)\coloneqq & (1-\lambda_{f,p}\, p^{-s-\frac{1}{2}}+p^{-2s})^{-1} \qquad (p>2).
\end{align*}
It is known that the global $L$-function $L(s,f)\coloneqq\prod_v L_v(s,f)$ is convergent for $\Re(s)>3/2$, $L(s,f)$ is analytically continued to the whole $s$-plane, and we have the functional equation $L(s,f)=\eps(s,f) \, L(1-s,f)$, where $\eps(s,f)$ is an analytic function called the $\eps$-factor of $L(s,f)$. 
These facts can be shown by the Godement-Jacquet theory and others.
In particular, we obtain $\eps(\tfrac{1}{2},f)=\epsilon\, 1$, which is called the root number of $L(s,f)$.

Let $E$ be an imaginary quadratic field with prime discriminant $\Delta_E$, (that is, $\Delta_E\le -3$, $\Delta_E$ is a prime, and $\Delta_E\equiv 1$ or $5\mod 8$).  
Then, we set 
\[
\eta_E(q)\coloneqq  \left( \frac{\Delta_E}{q} \right) \qquad \text{(Legendre symbol)}
\]
for each prime $q$. 
Then, we define the quadratic twist $L(s,f,\eta_E)$ of $L(s,f)$ by $L(s,f,\eta_E)\coloneqq\prod_v L_v(s,f,\eta_E)$ where 
\[
L_\infty(s,f,\eta_E) \coloneqq  L_\infty(s,f), \quad  
L_2(s,f,\eta_E)\coloneqq  (1- \eta_E(2) \, \lambda_{f,2}\, 2^{-s})^{-1} ,  
\]
\[ 
L_p(s,f,\eta_E)\coloneqq  (1-\eta_E(p)\, \lambda_{f,p}\, p^{-s-\frac{1}{2}}+p^{-2s})^{-1} \quad (p>2).
\]
Then, $L(s,f,\eta_E)$ satisfies the same convergence and analytic properties as those of $L(s,f)$, and we have the functional equation $L(s,f,\eta_E)=\eps(s,f,\eta_E) \, L(1-s,f,\eta_E)$. 
When $\eps(\tfrac{1}{2},f)=1$ (i.e., $f\in\cH_{l,+}^\Gamma$), we see that
\begin{equation}\label{eq:epscong}
    \eps(\tfrac{1}{2},f,\eta_E)=1 \quad \text{if and only if} \quad \Delta_E\equiv 5 \mod 8
\end{equation}
see \cite{CW23}*{\S4}. 

%\begin{align*}L_\infty(s,f,\eta_E) \coloneqq  & L_\infty(s,f), \\L_2(s,f,\eta_E)\coloneqq & (1- \eta_E(2) \, \lambda_{f,2}\, 2^{-s})^{-1} ,  \\L_p(s,f,\eta_E)\coloneqq & (1-\eta_E(2)\, \lambda_{f,p}\, p^{-s-\frac{1}{2}}+p^{-2s})^{-1} \qquad (p>2).\end{align*}
%is convergent for $\Re(s)>3/2$, $L(s,f,\eta_E)$ is analytically continued to the whole $s$-plane, and we have the functional equation $L(s,f,\eta_E)=\eps(s,f,\eta_E) \, L(1-s,f,\eta_E)$, where $\eps(s,f,\eta_E)$ is an analytic function called the $\eps$-factor of $L(s,f)$. 
%Each Hecke eigenform $f$ associates a degree two $L$-function $L_f(s)$. 
%Let $\eps_f(\frac12)$ denote the $\eps$-factor of $L_f(s)$.  
%If $f$ is a Hecke eigenform in $\cH_{l,+}^\Gamma\otimes\C$ (resp. $\cH_{l,-}^\Gamma\otimes\C$), then we have $\eps_f(\frac12)=1$ (resp. $\eps_f(\frac12)=-1$).

\subsection{Galois orbit and Jacquet-Langlands correspondence}\label{sec:GOJL}

Take an algebraic closure $\Bar{\Q}$ of $\Q$ in $\C$. 
The Galois group $\Gal(\Bar{\Q}/\Q)$ acts on $\cA_l\otimes \Bar{\Q}$ as
\[ 
\sigma(f)(x)=\sum_{j_1+j_2+j_3=l} \sigma(a_{j_1,j_2,j_3})\, x_1^{j_1}\, x_2^{j_2}\, x_3^{j_3}
\]
where $f(x)=\sum_{j_1+j_2+j_3=l} a_{j_1,j_2,j_3}\, x_1^{j_1}\, x_2^{j_2}\, x_3^{j_3}\in \cA_l\otimes \Bar{\Q}$. 
The action of $\Gal(\Bar{\Q}/\Q)$ commutes with $\Delta_Q$ and $T_p$ for all primes $p$.

For a Hecke eigenform $f$, the number field $\mathbb{F}_f$ generated by its all Hecke eigenvalues $\lambda_{f,p}$ is called the Hecke field. 
Let $\mathbb{F}_{l,\epsilon}$ be the composite of Hecke fields for all Hecke eigenforms in $\cH_{l,\epsilon}^\Gamma\otimes\C$, where $\epsilon=+$ or $-$. 
Since the eigenvalues $\lambda_{f,p}$ of $T_p$ are in $\R$ and $\cH_{l,\epsilon}^\Gamma$ is stable for any Galois action, we see that $\mathbb{F}_{l,\epsilon}$ is a totally real finite Galois extension of $\Q$. 
Since any Brandt matrix of $T_p$ is diagonalizable over $\mathbb{F}_{l,\epsilon}$, the vector space $\cH_{l,\epsilon}^\Gamma\otimes \mathbb{F}_{l,\epsilon}$ has a basis of Hecke eigenforms. 
For a Hecke eigenform $f\in \cH_{l,\epsilon}^\Gamma$, the subspace spanned by $\sigma f$ for all $\sigma\in\Gal(\mathbb{F}_{l,\epsilon}/\Q)$ is call the Galois orbit of $f$.

Let $S_{2l+2}^{\new,\epsilon}(\Gamma_0(2))$ denote the space of holomorphic newforms of weight $2l+2$ and level $2$ whose Atkin-Lehner eigenvalue is $\epsilon \, 1$. 
It is known that the Jacquet-Langlands correspondence
\[
\mathrm{JL} \colon \cH_{l,\epsilon}^\Gamma\otimes\C \to S_{2l+2}^{\new,\epsilon}(\Gamma_0(2))
\]
is an isomorhism between modules of Hecke algebras, see e.g. \cites{Shi72}. 
Hence, for a Hecke eigenform $f\in \cH_{l,\epsilon}^\Gamma$, the Hecke $L$-function of $\mathrm{JL}(f)$ agrees wtih $L(s,f)$, and the Galois orbit of $\mathrm{JL}(f)$ also agrees with that of $f$.

%\section{\texorpdfstring{Dimensions of $\cH_l(\cO)$}{TEXT}}
\section{Numerical computations for dimensions}\label{sec:2}

In this section, we give concrete data for dimensions by the trace formula method. 
The results of this section will be used in \S\ref{sec:fac}, but they are unnecessary for our algorithm in \S\ref{sec:3}. 
All results can be deduced from known results of holomorphic newforms via the Jacquet-Langlands correspondence $\mathrm{JL}$, but our argument here is better since we can see how the information of the discrete sets $\Gamma$ and $\fT_2$ is reflected in the dimension formulas.

\subsection{Trace formula}
Set 
\[
K_l^\Gamma(x,y)\coloneqq \frac{1}{\#\Gamma} \sum_{\gamma\in\Gamma} K_l(x\gamma ,y) .
\]
Then, $K_l^\Gamma(x,y)$ is the reproducing kernel on $\cH_l^\Gamma$. 
Hence, for an orthonormal basis $\psi_1$, $\psi_2,\dots,\psi_d$ in $\cH_l^\Gamma\otimes\C$, we have
\begin{equation}\label{eq:K}
K_l^\Gamma(x,y)=\sum_{j=1}^d \psi_j(x)\, \overline{\psi_j(y)} ,\quad (d=\dim \cH_l^\Gamma).
\end{equation}
Hence, we obtain
\[
    \Tr(T_p|_{\cH_l^\Gamma})= \sum_{j=1}^d \int_{S_Q}(T_p\psi_j)(x)\, \overline{\psi_j(x)}\, \d x =\frac{1}{\#\Gamma}\int_{S_Q} \sum_{\gamma\in \fT_p} K_l(x\gamma^{-1} ,x) \, \d x .
\]
The set of $\Gamma$-conjugacy classes in $\fT_p$ is written as
\[
\left\{ [\gamma_j] \mid 1\le j\le m_p \right\} \quad \left([\gamma_j]=\{\delta^{-1}\gamma_j\delta \mid \delta\in\Gamma \} \right).
\]
Take a real number $\theta_j$ such that $\theta_j$, $1$, and $-\theta_j$ are the eigenvalues of $\gamma_j$. 
By exchanging the integral and the sum, and using \eqref{eq:inv} and \eqref{eq:char2}, we derive
\begin{equation}\label{eq:trace}
    \Tr(T_p|_{\cH_l^\Gamma})=\frac{1}{\#\Gamma}\sum_{j=1}^{m_p} \# [\gamma_j] \, \Theta_l(\gamma_{\theta_j}). 
\end{equation}

\subsection{Dimension formula}

The $\Gamma$-conjugacy classes $[\gamma]$ in $\fT_1=\Gamma$ are as follows:
\[
[1], \quad [\mathfrak{i}], \quad [\mathfrak{w}] , \quad [\mathfrak{w}^2]. 
\]
Their cardinalities are $1$, $3$, $4$, $4$ respectively.  
For the trivial operator $T_1$, we see from \eqref{eq:trace}
\[
\dim \cH_l^\Gamma=\Tr(T_1|_{\cH_l^\Gamma})= \frac{1}{\#\Gamma}\sum_{[\gamma]\in \{ [1], [\mathfrak{i}],  [\mathfrak{w}] ,  [\mathfrak{w}^2] \} } \#[\gamma] \, \Theta_l(\gamma).
\]
Hence, we get
\[
\dim \cH_l^\Gamma=\frac{\Theta_l(1)}{12}+\frac{\Theta_l(\gamma_{\pi})}{4}+\frac{2\, \Theta_l(\gamma_{2\pi/3})}{3}.
\]
By \eqref{eq:char1} we obtain
\[
\dim \cH_l^\Gamma = \frac{2l+1}{12}+\frac{1}{4}(-1)^l +\frac{2}{3} \times\begin{cases}
1 & \text{if $l\equiv 0 \mod 3$,} \\
0 & \text{if $l\equiv 1 \mod 3$,} \\
-1 & \text{if $l\equiv 2 \mod 3$.} 
\end{cases}
\]
A list for numerical numbers is as follows:
\[
\begin{array}{c|ccccccccccccc}
l & 0& 1& 2 & 3 & 4 & 5 & 6 & 7 & 8 & 9 & 10& 11& 12  \\ \hline
\dim \cH_l^\Gamma  & 1& 0&  0 & 1& 1& 0& 2& 1& 1& 2& 2& 1& 3
\end{array}
\]
In addition,
\[
\sum_{l=0}^\inf\dim \cH_l^\Gamma \, t^l =\frac{1+t^6}{(1-t^3)(1-t^4)}. 
\]
If we divide it by even and odd degrees, then
\begin{equation}\label{eq:dim1}
\sum_{m=0}^\inf\dim \cH_{2m}^\Gamma \, t^{2m} =\frac{1+t^6}{(1-t^4)(1-t^6)}, \qquad \sum_{m=0}^\inf\dim \cH_{2m+1}^\Gamma \, t^{2m+1} =\frac{t^3(1+t^6)}{(1-t^4)(1-t^6)}. 
\end{equation}

\subsection{Trace of Atkin-Lehner operator}

By a direct calculation, we have
\[
\fT_2=\Gamma \gamma_1=\gamma_1\Gamma=\Gamma\gamma_2=\gamma_2\Gamma,
\]
\[
\gamma_1=\cF(1+i)=\begin{pmatrix}
    0&-1&0 \\ 1&1&1 \\ -1&0&0 
\end{pmatrix} ,\quad \gamma_2=\cF(i-j)=\begin{pmatrix}
    0&-1&0 \\ -1&0&0 \\ 0&0&-1 
\end{pmatrix},
\]
and there are only two $\Gamma$-conjugacy classes $[\gamma_1]$ and $[\gamma_2]$ in $\fT_2$ $(\#[\gamma_1]=\#[\gamma_2]=6)$.
Hence, from \eqref{eq:trace} we deduce
\[
\Tr(T_2|_{\cH_l^\Gamma})=\frac{1}{2}\Theta_l(\gamma_{\pi /2})+\frac{1}{2}\Theta_l(\gamma_{\pi})=\begin{cases}
    0 & \text{$l\equiv 1$ or $2 \mod 4$}, \\ -1 & \text{$l\equiv 3 \mod 4$}, \\ 1 & \text{$l\equiv 0 \mod 4$}.
\end{cases}
\]
Since
\[
\Tr(T_2|_{\cH_l^\Gamma})=(-1)^l\dim \cH_{l,+}^\Gamma-(-1)^l\dim \cH_{l,-}^\Gamma, \quad \dim \cH_l^\Gamma=\dim \cH_{l,+}^\Gamma+\dim \cH_{l,-}^\Gamma,
\]
we have
\begin{multline*}
\dim \cH_{l,\pm}^\Gamma =\frac{\dim \cH_l^\Gamma\pm(-1)^l\Tr(T_2|_{\cH_l^\Gamma})}{2}\\
=\frac{2l+1}{24}+\frac{1}{8}(-1)^l +\frac{1}{3} \times\begin{cases}
1 & \text{if $l\equiv 0 \mod 3$,} \\
0 & \text{if $l\equiv 1 \mod 3$,} \\
-1 & \text{if $l\equiv 2 \mod 3$,} 
\end{cases}
\pm\frac{1}{2} \times\begin{cases}
1 & \text{if $l\equiv 0$ or $3 \mod 4$,} \\
0 & \text{if $l\equiv 1$ or $2 \mod 4$.}  
\end{cases}
\end{multline*}
A list for numerical numbers is as follows:
\[
\begin{array}{c|ccccccccccccc}
l & 0& 1& 2 & 3 & 4 & 5 & 6 & 7 & 8 & 9 & 10& 11& 12  \\ \hline
\dim \cH_l^\Gamma & 1& 0&  0 & 1& 1& 0& 2& 1& 1& 2& 2& 1& 3 \\
\dim \cH_{l,+}^\Gamma & 1& 0& 0 & 1& 1& 0& 1& 1& 1& 1& 1& 1& 2 \\
\dim \cH_{l,-}^\Gamma & 0& 0& 0 & 0& 0& 0& 1& 0& 0& 1& 1& 0& 1
\end{array}
\]
Since the dimension changes modulo 12, we obtain
\[
\dim \cH_{12m+j,\pm}^\Gamma = m+ \dim \cH_{j,\pm}^\Gamma \quad \text{if $l=12m+j$, $m\ge 0$ and $0\le j< 12$}. 
\]
Their generating functions are as follows:
\begin{equation}\label{eq:dim2}
\sum_{l=0}^\inf\dim \cH_{l,+}^\Gamma \, t^l =\frac{1}{(1-t^3)(1-t^4)},\qquad \sum_{l=0}^\inf\dim \cH_{l,-}^\Gamma \, t^l =\frac{t^6}{(1-t^3)(1-t^4)}.    
\end{equation}

\section{Computations for basis of \texorpdfstring{$\cH_{l,\pm}^\Gamma$}{TEXT}}\label{sec:3}

In this section, we present our algorithm for bases of $\cH_l^\Gamma$ and $\cH_{l,\pm}^\Gamma$. 
Throughout this paper, we write $\Q^n$ for the space of column vectors of degree $n$ over $\Q$. 
Write $e_j=e_j^{(n)}$ for the $j$-th basic vector in $\Q^n$, that is, $e_j^{(n)}={}^t\!\begin{pmatrix}
    \delta_{j1} & \delta_{j2} &\cdots& \delta_{jn}
\end{pmatrix}$ where $\delta_{ij}$ means the Dirac delta. 
Write $e_0=e_0^{(n)}$ for the zero vector, that is, $e_0={}^t\!\begin{pmatrix}
    0 & 0 &\cdots& 0
\end{pmatrix}$. 
Throughout this section, we also consider $\Delta_Q$ to be a linear mapping from $\cA_l$ to $\cA_{l-2}$.

\subsection{Monomials of $\cA_l$}

The monomials $x_1^{l_1}x_2^{l_2}x_3^{l_3}$ form a basis of $\cA_l$, hence we will order the monomials. 
Our order is known as the graded reverse lexicographic order. 
%We consider a basis of $\cA_l$, whose dimension is $\dim \cA_l=\frac{(l+1)(l+2)}{2}$.
Put $d_l\coloneqq\dim\cA_l$ and set
\[
\cL_l\coloneqq\{(l_1,l_2,l_3)\in\Z^3_{\ge0} \mid l_1+l_2+l_3=l \}, \qquad \cN_l\coloneqq\{ m\in\Z \mid 0\le m\le d_l-1 \}.
\]
Note that $\dim \cA_l=\frac{(l+1)(l+2)}{2}$ and $\cN_l$ has the usual order. 
An order $<$ on $\cL_l$ is defined by 
\[
\ul<\ul'\quad \Leftrightarrow \quad \text{$l_1<l_1'$ or [$l_1=l_1'$ and $l_2<l_2'$]}
\]
for $\ul=(l_1,l_2,l_3)$, $\ul'=(l_1',l_2',l_3')\in\cL_l$, and a mapping $\fF_l\colon \cL_l\to \cN_l$ is defined by
\[
\fF_l(\ul)=l_1(l+1)-\frac{l_1(l_1-1)}{2}+l_2. 
\]
\begin{lem}
The mapping $\fF_l\colon \cL_l\to \cN_l$ is an ordered isomorphism. Here, we are considering the usual ordering on $\cN_l$.  
\end{lem}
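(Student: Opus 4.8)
The plan is to exhibit $\fF_l$ as the \emph{rank function} of the totally ordered set $(\cL_l,<)$, i.e.\ to show that $\fF_l(\ul)$ equals the number of elements of $\cL_l$ lying strictly below $\ul$. Since $\cL_l$ and $\cN_l$ are finite totally ordered sets and $\cN_l=\{0,1,\dots,d_l-1\}$ carries the usual order, this interpretation settles the lemma at once: the rank function of any finite total order of cardinality $N$ is the unique order isomorphism onto $\{0,1,\dots,N-1\}$. Thus I only need to verify that $|\cL_l|=d_l$ and that $\fF_l$ is genuinely the rank function.

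First I would note that $\cL_l$ indexes the monomial basis of $\cA_l$, so $|\cL_l|=\dim\cA_l=\tfrac{(l+1)(l+2)}{2}=d_l=|\cN_l|$. Parametrizing $\ul=(l_1,l_2,l_3)\in\cL_l$ by $(l_1,l_2)$ subject to $0\le l_1\le l$ and $0\le l_2\le l-l_1$ (with $l_3=l-l_1-l_2$ determined), I observe that for a fixed first coordinate $k$ there are exactly $l-k+1$ elements of $\cL_l$. By the definition of $<$, the elements strictly below $\ul$ split into those with first coordinate $k<l_1$ (arbitrary admissible second coordinate) and those with first coordinate $l_1$ and second coordinate $l_2'<l_2$. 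These contribute $\sum_{k=0}^{l_1-1}(l-k+1)=l_1(l+1)-\tfrac{l_1(l_1-1)}{2}$ and $l_2$ respectively, and their sum is exactly $\fF_l(\ul)$.

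With the rank interpretation established, the conclusion is automatic: writing the elements of $\cL_l$ in increasing order as $\ul^{(0)}<\ul^{(1)}<\dots<\ul^{(d_l-1)}$, the element $\ul^{(i)}$ has exactly $i$ elements below it, so $\fF_l(\ul^{(i)})=i$ and $\fF_l$ is a strictly increasing bijection of $\cL_l$ onto $\{0,1,\dots,d_l-1\}=\cN_l$. The only step needing any care is the elementary summation $\sum_{k=0}^{l_1-1}(l-k+1)=l_1(l+1)-\tfrac{l_1(l_1-1)}{2}$; there is no real obstacle here, as the whole argument is just the bookkeeping of how the lexicographic blocks stack up.
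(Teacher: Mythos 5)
Your proof is correct. The counting is right: for fixed first coordinate $k$ there are $l-k+1$ admissible pairs, the elements strictly below $\ul=(l_1,l_2,l_3)$ are exactly those with smaller first coordinate together with those sharing $l_1$ but having smaller $l_2$, and $\sum_{k=0}^{l_1-1}(l-k+1)=l_1(l+1)-\tfrac{l_1(l_1-1)}{2}$, so $\fF_l(\ul)$ is indeed the rank of $\ul$ in $(\cL_l,<)$. From there the conclusion is immediate, since the rank function of a finite totally ordered set of cardinality $d_l$ is the unique order isomorphism onto $\{0,\dots,d_l-1\}=\cN_l$.

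The route is genuinely different from the paper's, though both are elementary bookkeeping. The paper proves strict monotonicity of $\fF_l$ directly: it observes that $\fF_l$ increases in each of $l_1$ and $l_2$ separately and then checks the one nontrivial transition, namely that $\fF_l(l_1+1,0,l-l_1-1)-\fF_l(l_1,l-l_1,0)=1$, so that consecutive ``blocks'' (fixed first coordinate) join up with no gap; injectivity follows from monotonicity, and surjectivity from $|\cL_l|=|\cN_l|$. Your argument instead interprets the formula for $\fF_l$ as a cardinality, which delivers order-preservation, injectivity, and surjectivity simultaneously and explains \emph{why} the formula has the shape it does. The paper's version is marginally shorter to write down (one telescoping identity instead of a sum plus a case split), while yours is more conceptual and in particular makes the inverse map $\fF_l^{-1}(m)=(l_1(m),l_2(m),l_3(m))$ given just after the lemma transparent. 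Both need the count $|\cL_l|=d_l$; you use it to identify the target of the rank function with $\cN_l$, the paper uses it to upgrade injectivity to bijectivity.
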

\begin{proof}
First, we will show that $\fF_l$ is monotonic increase. 
It is obvious that $\fF_l$ is monotonic increase when $l_1$ (resp. $l_2$) moves and $l_2$ (resp. $l_1$) is fixed. 
Hence, it is enough to confirm
\[
\fF_l(l_1+1,0,l-l_1-1)-\fF_l(l_1,l-l_1,0)=(l_1+1)(l+1)-\frac{l_1(l_1+1)}{2}-l_1(l+1)+\frac{l_1(l_1-1)}{2}-l+l_1=1.
\]
%\begin{multline*}\fF_l(l_1+1,0,l-l_1-1)-\fF_l(l_1,l-l_1,0)\\=(l_1+1)(l+1)-\frac{l_1(l_1+1)}{2}-l_1(l+1)+\frac{l_1(l_1-1)}{2}-l+l_1=1.\end{multline*}
Therefore, we find that $\fF_l$ is monotonic increase, hence $\fF_l$ is injective. 
Since the cardinality of $\cL_l$ is the same as $\cN_l$, we obtain the surjectivity of $\fF_l$.  
\end{proof}
For each $m\in\cN_l$, we set
\[
l_1(m)\coloneqq \max \left\{ l_1\in\N  \; \middle| \;  l_1(l+1)-\frac{l_1(l_1-1)}{2} \le m \right\},
\]
\[
l_2(m)\coloneqq m-l_1(m)(l+1)+\frac{l_1(m)(l_1(m)-1)}{2}, \quad l_3(m)\coloneqq l-l_1(m)-l_2(m).
\]
Then, the inverse mapping $\fF_l^{-1}\colon\cN_l\to\cL_l$ is given by $\fF_l^{-1}(m)=(l_1(m),l_2(m),l_3(m))$.

Set $\underline{x}^{\ul}\coloneqq x_1^{l_1}x_2^{l_2}x_3^{l_3}$. 
Now, we can consider the basis $\{ \underline{x}^{\ul} \}_{\ul\in\cL_l}$ whose vectors are arranged in the descending order of $\cL_l$, that is.
\[
\left\{ \, x_3^l, \;\; x_2 \, x_3^{l-1}, \;\; x_2^2x_3^{l-2},  \; \dots, \;  x_2^{l-1}x_3, \;\; x_2^l, \;\;  x_1x_3^{l-1}, \;\; x_1x_2x_3^{l-2},\dots \, \right\} . 
\]
Fix a basis of $\Q^{d_l}$ as
\[
\left\{\, e_1^{(d_l)}, \;\; e_2^{(d_l)}, \; \dots \; , \;\;  e_{d_l}^{(d_l)} \, \right\}.
\]
Then, from the ordered isomorphism $\fF_l$, we obtain an isomorphism $\sF_l$ from $\cA_l$ to $\Q^{d_l}$ as
\[
\sF_l\colon \cA_l\to \Q^{d_l} ,\qquad \sF_l(\underline{x}^{\ul})=e_{\fF_l(\ul)+1}. 
\]

\subsection{Harmonic polynomials}
Define a set $\tilde\cL_l$ (including $\cL_l$) as
\[
\tilde\cL_l \coloneqq \{(l_1,l_2,l_3)\in\Z^3 \mid l_1+l_2+l_3=l \}.
\]
Maps $\fD_i\colon \tilde\cL_l\to \tilde\cL_{l-1}$, which mean differentiations, are defined by 
\[
\fD_1(l_1,l_2,l_3)=(l_1-1,l_2,l_3), \;\; \fD_2(l_1,l_2,l_3)=(l_1,l_2-1,l_3), \;\; \fD_3(l_1,l_2,l_3)=(l_1,l_2,l_3-1) ,
\]
and we also set
\[
\fD_{ij}\colon \cL_l\to\tilde\cL_{l-2} ,\quad \fD_{ij}\coloneqq \fD_i \circ \fD_j.
\]
Set $\tilde\cN_l\coloneqq \cN_l\sqcup\{-1\}$ and define
\[
\tilde\fF_l\colon \tilde\cL_l\to \tilde\cN_l ,\qquad \tilde\fF_l|_{\cL_l}=\fF_l, \qquad \tilde\fF_l(\ul)=-1 \quad (\ul\in\tilde\cL_l\setminus\cL_l).
\]
From this, let us define the second derivative. 
Consider the composite mapping $\fd_{ij}$ as
\[
\fd_{ij}\colon \cN_l \to \cL_l\to \tilde\cL_{l-2} \to\tilde\cN_{l-2},\qquad \fd_{ij}= \tilde\fF_{l-2}\circ \fD_{ij} \circ\fF_l^{-1}.
\]
A linear mapping $\partial_{ij}\colon \Q^{d_l} \to \Q^{d_{l-2}}$ is defined by 
\[
\partial_{jj}(e_{m+1})=l_j(m)\, (l_j(m)-1)\, e_{\fd_{jj}(m)+1}, \quad \partial_{ij}(e_{m+1})=l_i(m)\, l_j(m)\, e_{\fd_{ij}(m)+1},
\]
where $m \in \cN_l$. 
Then, a differential operator $\Delta_l\colon  \Q^{d_l} \to \Q^{d_{l-2}}$ is defined by
\[
\Delta_l = \frac{1}{2} \left( \partial_{11}+\partial_{22}+\partial_{33}+\partial_{12}+\partial_{13}+\partial_{23} \right) .
\]
Through the isomorphism $\sF_l$ between $\Q^{d_l}$ and $\cA_l$, we identify $\Delta_l$ with $\Delta_Q$. 
Therefore, the kernel $\mathrm{Ker}(\Delta_l)$ agrees with $\sF_l(\cH_l)$.  
Note that the size of the representative matrix of $\Delta_l$ is $d_{l-2}\times d_l$.

\subsection{Average mapping \texorpdfstring{$\mathscr{A}_l$}{TEXT}}

An action of $\GL_3(\Q)$ on $\cA_l$ is defined by
\begin{equation*}%\label{eq:act}
(\gamma \cdot f)(x_1,x_2,x_3)=f((x_1,x_2,x_3)\gamma) \quad (\gamma\in\GL_3(\Q)).    
\end{equation*}
Note that $(x_1,x_2,x_3)$ is a row vector of degree $3$ and $(x_1,x_2,x_3)\gamma$ means the usual product of matrices. 
We also note that this action of $\GL_3(\Q)$ on $\cA_l$ is a left action, that is, $(\gamma_1\gamma_2)\cdot f=\gamma_1\cdot(\gamma_2\cdot f)$. 
An average (linear) mapping $\mathrm{Ave}_l \colon \cA_l \to \cA_l$ of $\Gamma=\cF(\cO^\times)$ is defined by
\[
\mathrm{Ave}_l (f)=\sum_{\gamma\in\Gamma} \gamma\cdot f \qquad (f\in\cA_l). 
\]
For any $\Gamma$-invariant polynomial $f$ in $\cA_l$, we have $\mathrm{Ave}_l (f)= \#\Gamma \times f$. 
Hence, we find that the image of $\mathrm{Ave}_l$ is the subspace consisting of $\Gamma$-invariant polynomials in $\cA_l$. 
Therefore, we obtain
\begin{equation}\label{eq:h=avde}
\cH_l^\Gamma= \mathrm{Im}(\mathrm{Ave}_l) \, \cap \, \mathrm{Ker}(\Delta_Q),    
\end{equation}
since $\cH_l=\mathrm{Ker}(\Delta_Q)$. 
In the computational program for $\mathrm{Ave}_l$, we directly input the actions of $\Gamma$ on the polynomials. 
See \S\ref{sec:list} for the list of elements of $\Gamma$.

\subsection{Computation of a basis of \texorpdfstring{$\cH_l^\Gamma$}{TEXT}}\label{sec:algH}

Define a linear mapping $\mathscr{A}_l\colon \Q^{d_l}\to\Q^{d_l}$ by
\[
\mathscr{A}_l=\sF_l\circ \mathrm{Ave}_l\circ \sF_l^{-1}.
\]
Then, by \eqref{eq:h=avde} we have
\begin{equation}\label{eq:intersection}
\sF_l(\cH_l^\Gamma)= \mathrm{Im}(\mathscr{A}_l) \, \cap \, \mathrm{Ker}(\Delta_l).
\end{equation}
From this, we consider an algorithm on a basis of $\sF_l(\cH_l^\Gamma)$ by the right-hand side of \eqref{eq:intersection}. 
\begin{itemize}
\item Denote by $A$ the representative matrix of $\mathscr{A}_l \colon \Q^{d_l}\to\Q^{d_l}$ with respect to the basis $ $ $\{e_1,e_2,\dots, e_{d_l} \}$. The size of $A$ is $d_l\times d_l$. 
\item Denote by $D$ the representative matrix of $\Delta_l\colon \Q^{d_l}\to\Q^{d_{l-2}}$ with respect to the bases $\{e_1,e_2,\dots, e_{d_l} \}$ and $\{e_1,e_2,\dots, e_{d_{l-2}} \}$. The size of $D$ is $d_{l-2}\times d_l$.  
\end{itemize}
Suppose that $A$ and $D$ were numerically obtained. 
Set
\[
A=\begin{pmatrix}\ba_1&\ba_2&\cdots&\ba_{d_l} \end{pmatrix}.
\]
Since $\mathrm{Im}(\mathscr{A}_l)=\langle \ba_1, \ba_2,\dots,\ba_{d_l} \rangle$, by simplifying the matrix $A$, 
one can calculate a basis of $\mathrm{Im}(\mathscr{A}_l)$ as
\[
\ba_{i_1}, \ba_{i_2},\dots,\ba_{i_c} \qquad (i_1<i_2<\cdots<i_c , \quad  c=\mathrm{rank}(A)) .
\]
Set
\[
B\coloneqq  \begin{pmatrix} \ba_{i_1}& \ba_{i_2}&\cdots&\ba_{i_c} \end{pmatrix}.
\]
Then, $B$ is a $d_l\times c$ matrix, and 
\[
T_B \colon \Q^c \to \mathrm{Im}(\mathscr{A}_l) \, (\subset \Q^{d_l}),\quad T_B(x)=Bx
\]
is isomorphic. 
Consider the composition
\[
\Delta_l\circ T_B \colon \Q^c \to \Q^{d_{l-2}} ,\quad \Delta_l\circ T_B(x)=DBx \quad (c=\mathrm{rank}(B)).
\]
Since the kernel of $\Delta_l\circ T_B$ is the solution space of $DBx=0$, one can compute its basis and we denote it by 
\[
\bb_1,\bb_2,\dots,\bb_t \qquad (t=c-\mathrm{rank}(DB)).
\]
Since $T_B$ is isomorphic, a basis of \eqref{eq:intersection} is given by
\[
B\bb_1,B\bb_2,\dots,B\bb_t.
\]
Thus, a basis of $\cH_l^\Gamma$ is obtained by
\[
 \sF_l^{-1}(B\bb_1),\sF_l^{-1}(B\bb_2),\dots,\sF_l^{-1}(B\bb_t).
\]
This completes the description of our algorithm.

\subsection{Average mapping \texorpdfstring{$\mathrm{Ave}_{l,\pm}$}{TEXT}}

Recall the Hecke operator $T_2$ and the corresponding set $\fT_2=\gamma_2\Gamma=\Gamma\gamma_2$. 
An average (linear) mapping $\mathrm{Ave}_{l,\pm}\colon \cA_l\to \cA_l$ is defined by
\[
\mathrm{Ave}_{l,\pm}(f)\coloneqq \sum_{\gamma\in \Gamma} \gamma\cdot f \pm (-1)^l\sum_{\gamma\in \Gamma \gamma_2} \gamma\cdot f \qquad \text{(double sign in same order)}
\]
where $f\in \cA_l$. 
\begin{lem}\label{lem:pm}
\[
\cH_{l,\pm}^\Gamma= \mathrm{Im}(\mathrm{Ave}_{l,\pm}) \, \cap \, \mathrm{Ker}(\Delta_Q).
\]    
\end{lem}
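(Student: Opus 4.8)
The plan is to follow the same two-step pattern used for \eqref{eq:h=avde}: first identify $\mathrm{Im}(\mathrm{Ave}_{l,\pm})$ with a concrete eigenspace inside the $\Gamma$-invariant polynomials of $\cA_l$, and then intersect with $\mathrm{Ker}(\Delta_Q)$.

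I would begin by collecting the relevant properties of $\gamma_2 = \cF(i-j)$. Because $\fT_2 = \Gamma\gamma_2$ is a single left coset, the Hecke operator restricted to $\cH_l^\Gamma$ is simply $(T_2 f)(x) = f(x\gamma_2^{-1})$, i.e. $T_2 f = \gamma_2^{-1}\cdot f$. Since $(i-j)^2 = -2 \in \mathrm{Ker}(\cF) = \Q^\times$, we have $\gamma_2^2 = \cF((i-j)^2) = 1$, so that $\gamma_2^{-1} = \gamma_2$ and hence $T_2 f = \gamma_2\cdot f$ on $\cH_l^\Gamma$. Moreover the identity $\gamma_2\Gamma = \Gamma\gamma_2$ (from the Atkin-Lehner computation) shows that $\gamma_2$ normalizes $\Gamma$, and $\gamma_2 \in \SO(Q)$ guarantees that $f \mapsto \gamma_2\cdot f$ commutes with $\Delta_Q$ and therefore preserves $\mathrm{Ker}(\Delta_Q) = \cH_l$.

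Next I would rewrite the definition. Using that the $\GL_3(\Q)$-action is a left action and reindexing $\Gamma\gamma_2$, I get
\[
\mathrm{Ave}_{l,\pm}(f) = \mathrm{Ave}_l(f) \pm (-1)^l\,\mathrm{Ave}_l(\gamma_2\cdot f).
\]
The key claim is then that $\mathrm{Im}(\mathrm{Ave}_{l,\pm})$ equals the $\pm(-1)^l$-eigenspace of $f \mapsto \gamma_2\cdot f$ inside $\mathrm{Im}(\mathrm{Ave}_l)$ (the space of $\Gamma$-invariant polynomials). For the inclusion $\subseteq$, note $\mathrm{Ave}_{l,\pm}(f)$ is $\Gamma$-invariant since it lies in $\mathrm{Im}(\mathrm{Ave}_l)$; applying the equivariance $\gamma_2\cdot\mathrm{Ave}_l(g) = \mathrm{Ave}_l(\gamma_2\cdot g)$ (valid precisely because $\gamma_2$ normalizes $\Gamma$) together with $\gamma_2^2\cdot f = f$, a one-line computation yields $\gamma_2\cdot\mathrm{Ave}_{l,\pm}(f) = \pm(-1)^l\,\mathrm{Ave}_{l,\pm}(f)$. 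For $\supseteq$, if $f$ is $\Gamma$-invariant with $\gamma_2\cdot f = \pm(-1)^l f$, then $\mathrm{Ave}_l(f) = \#\Gamma\,f$ and $\mathrm{Ave}_l(\gamma_2\cdot f) = \pm(-1)^l\#\Gamma\,f$, so that $\mathrm{Ave}_{l,\pm}(f) = 2\#\Gamma\,f$, exhibiting $f$ in the image.

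Finally I would intersect with $\mathrm{Ker}(\Delta_Q)$. Since $f \mapsto \gamma_2\cdot f$ preserves $\mathrm{Ker}(\Delta_Q)$, intersecting the eigenspace description with $\mathrm{Ker}(\Delta_Q)$ gives exactly $\{f \in \cH_l^\Gamma \mid \gamma_2\cdot f = \pm(-1)^l f\}$, which by $T_2 = \gamma_2\cdot$ on $\cH_l^\Gamma$ is $\cH_{l,\pm}^\Gamma$ by definition. I expect the only genuine content to be the equivariance identity $\gamma_2\cdot\mathrm{Ave}_l(g) = \mathrm{Ave}_l(\gamma_2\cdot g)$, which rests on $\gamma_2$ normalizing $\Gamma$; the identification $T_2 = \gamma_2\cdot$ via $\gamma_2^2 = 1$ is the other point to get right, and the remaining sign bookkeeping is routine.
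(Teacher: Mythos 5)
Your proof is correct, and it takes a genuinely different route from the paper's. You characterize $\mathrm{Im}(\mathrm{Ave}_{l,\pm})$ exactly, as the $\pm(-1)^l$-eigenspace of $f\mapsto\gamma_2\cdot f$ inside the $\Gamma$-invariants $\mathrm{Im}(\mathrm{Ave}_l)$, using only that $\gamma_2^2=1$ (from $(i-j)^2=-2\in\mathrm{Ker}(\cF)$) and that $\gamma_2$ normalizes $\Gamma$ (from $\Gamma\gamma_2=\gamma_2\Gamma$); the lemma then falls out by intersecting with $\mathrm{Ker}(\Delta_Q)$ and citing \eqref{eq:h=avde} for $\mathrm{Im}(\mathrm{Ave}_l)\cap\mathrm{Ker}(\Delta_Q)=\cH_l^\Gamma$. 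The paper instead complexifies: it uses the $G$-invariant inner product \eqref{eq:inner} to split $\cA_l\otimes\C=(\cH_l\otimes\C)\oplus(\cH_l\otimes\C)^\perp$ into $\mathrm{Ave}_{l,\pm}$-stable pieces, deduces $\mathrm{Im}(\mathrm{Ave}_{l,\pm})\cap\cH_l=\mathrm{Im}(\mathrm{Ave}_{l,\pm}|_{\cH_l})$ after a descent-to-$\Q$ argument, and then identifies $\mathrm{Ave}_{l,\pm}|_{\cH_l}$ with (a multiple of) the composite of the projections onto $\cH_l^\Gamma$ and onto the $T_2$-eigenspace via the identity $\mathrm{Ave}_{l,\pm}\circ\mathrm{Ave}_l=\#\Gamma\,\mathrm{Ave}_{l,\pm}$. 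Your argument is more elementary and self-contained — no inner product, no complexification, no rationality descent — and it yields the sharper statement that the image itself is an eigenspace, not merely its intersection with the harmonics. The paper's orthogonal-complement technique, on the other hand, is the one that generalizes when the averaging set is not a single coset of an involution normalizing $\Gamma$, i.e.\ for Hecke operators $T_p$ with $p>2$. Both arguments ultimately lean on the same structural facts ($\fT_2=\Gamma\gamma_2=\gamma_2\Gamma$ and the left-action identity $(\gamma_1\gamma_2)\cdot f=\gamma_1\cdot(\gamma_2\cdot f)$), so yours is a legitimate simplification rather than a lucky shortcut.
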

\begin{proof}
%Since $\Gamma\sqcup\Gamma\gamma_2$ is a subset of $\SO(Q)$, we find that $\Delta_Q$ commutes with $\mathrm{Ave}_{l,\pm}$. Hence, we have\[\mathrm{Im} (\mathrm{Ave}_{l,\pm}|_{\cH_l}) \subset \mathrm{Im}(\mathrm{Ave}_{l,\pm}) \, \cap \, \cH_l\]
We can extend $\mathrm{Ave}_{l,\pm}$ to a linear mapping $\cA_l\otimes\C\rightarrow\cA_l\otimes\C$, and we have the orthogonal decomposition
\[
\cA_l\otimes\C = (\cH_l\otimes \C) \bigoplus (\cH_l\otimes \C)^\perp
\]
with respect to the inner product \eqref{eq:inner}. 
Since $\cH_l\otimes \C$ (resp. $(\cH_l\otimes \C)^\perp$) is stable for the action of $G$ and $\Gamma\sqcup\Gamma\gamma_2$ is a subset of $\SO(Q)\subset G$, we have $\mathrm{Ave}_{l,\pm}(\cH_l\otimes \C)\subset \cH_l\otimes \C$ (resp. $\mathrm{Ave}_{l,\pm}((\cH_l\otimes \C)^\perp)\subset (\cH_l\otimes \C)^\perp$). 
Hence,
\begin{equation}\label{eq:cdecom}
\mathrm{Ave}_{l,\pm}(\cA_l\otimes\C)\cap (\cH_l\otimes\C)=\mathrm{Ave}_{l,\pm}(\cH_l\otimes \C)=\mathrm{Ave}_{l,\pm}(\cH_l)\otimes \C.    
\end{equation}
Since $\Delta_Q$ commutes with $\mathrm{Ave}_{l,\pm}$, we have $\mathrm{Ave}_{l,\pm}(\cH_l)\subset \mathrm{Ave}_{l,\pm}(\cA_l)\cap \cH_l$. 
Fix a $\Q$-basis $u_1,\dots,u_m$ of $\mathrm{Ave}_{l,\pm}(\cH_l)$. 
Since the $\Q$-polynomials $u_1,\dots,u_m$ are linearly independent over $\C$, any $\Q$-polynomial in $\mathrm{Ave}_{l,\pm}(\cA_l)\cap \cH_l$ is expressed as a $\Q$-linear combination of $u_1,\dots,u_m$ by \eqref{eq:cdecom}. 
Therefore, we see $\mathrm{Im}(\mathrm{Ave}_{l,\pm})\cap \cH_l = \mathrm{Im} (\mathrm{Ave}_{l,\pm}|_{\cH_l})$, that is, 
\[
\mathrm{Im}(\mathrm{Ave}_{l,\pm})\cap \mathrm{Ker}(\Delta_Q) = \mathrm{Im} (\mathrm{Ave}_{l,\pm}|_{\cH_l}),
\]
because $\cH_l=\mathrm{Ker}(\Delta_Q)$. 
Hence, it is sufficient to prove $\mathrm{Im}(\mathrm{Ave}_{l,\pm}|_{\cH_l})=\cH_{l,\pm}^\Gamma$. 

For any polynomial $f\in\cA_l$, we have
\begin{multline}\label{eq:avav}
   \frac{1}{\#\Gamma} \mathrm{Ave}_{l,\pm}\circ \mathrm{Ave}_{l}(f) = \frac{1}{\#\Gamma} \sum_{\gamma\in \Gamma} \gamma\cdot \left(\sum_{\gamma'\in\Gamma}\gamma'\cdot f \right) \pm \frac{(-1)^l}{\#\Gamma}\sum_{\gamma\in \Gamma \gamma_2} \gamma\cdot \left(\sum_{\gamma'\in\Gamma}\gamma'\cdot f \right) \\ 
   = \frac{1}{\#\Gamma} \sum_{\gamma\in \Gamma} \sum_{\gamma'\in\Gamma}(\gamma\gamma')\cdot f  \pm \frac{(-1)^l}{\#\Gamma}\sum_{\gamma\in \gamma_2\Gamma } \sum_{\gamma'\in\Gamma}(\gamma\gamma')\cdot f  = \sum_{\gamma\in \Gamma} \gamma\cdot f  \pm (-1)^l\sum_{\gamma\in \gamma_2\Gamma } \gamma\cdot f = \mathrm{Ave}_{l,\pm}(f). 
\end{multline}
In addition, by \eqref{eq:h=avde}, $\frac{1}{\#\Gamma}\mathrm{Ave}_{l}|_{\cH_l}$ agrees with the projection $\mathrm{Proj}\colon \cH_l\to\cH_l$ such that $\mathrm{Im}(\mathrm{Proj})=\cH_l^\Gamma$ and $\mathrm{Proj}|_{\cH_l^\Gamma}$ is the identity mapping, since any $\Gamma$-invariant polynomial $f$ satisfies $\frac{1}{\#\Gamma}\mathrm{Ave}_{l}(f)=f$. 
Therefore, from \eqref{eq:avav} we obtain
\[
\mathrm{Ave}_{l,\pm}|_{\cH_l}=\mathrm{Ave}_{l,\pm}|_{\cH_l}\circ\mathrm{Proj}=\mathrm{Ave}_{l,\pm}|_{\cH_l^\Gamma}\circ\mathrm{Proj}. 
\]
By \eqref{eq:pmfo}, %\eqref{eq:op} \eqref{eq:act}, 
we see that $\mathrm{Ave}_{l,\pm}|_{\cH_l^\Gamma}$ agrees with the projection $\mathrm{Proj}_\pm\coloneqq\frac{1}{2}(\mathrm{Id} +(-1)^l T_2)$ on $\cH_l^\Gamma$ such that $\mathrm{Im}(\mathrm{Proj}_\pm)=\cH_{l,\pm}^\Gamma$ and $\mathrm{Proj}_\pm|_{\cH_{l,\pm}^\Gamma}$ is the identity mapping. 
This completes the proof.     
\end{proof}

\subsection{Computation of a basis of \texorpdfstring{$\cH_{l,\pm}^\Gamma$}{TEXT}}

By Lemma \ref{lem:pm}, we can consider an algorithm for $\cH_{l,\pm}^\Gamma$ in the same way as in \S\ref{sec:algH} for $\cH_{l}^\Gamma$.
Define a linear mapping $\mathscr{A}_{l,\pm}\colon \Q^{d_l}\to\Q^{d_l}$ by
\[
\mathscr{A}_{l,\pm}=\sF_l\circ \mathrm{Ave}_{l,\pm}\circ \sF_l^{-1}.
\]
Then, by Lemma \ref{lem:pm} we have
\begin{equation}\label{eq:intersectionpm}
\sF_l(\cH_{l,\pm}^\Gamma)= \mathrm{Im}(\mathscr{A}_{l,\pm}) \, \cap \, \mathrm{Ker}(\Delta_l).
\end{equation}
%From this, we consider an algorithm on a basis of $\sF_l(\cH_{l,\pm}^\Gamma)$ by the right-hand side of \eqref{eq:intersection}. 
Denote by $A_\pm$ the representative matrix of $\mathscr{A}_{l,\pm} \colon \Q^{d_l}\to\Q^{d_l}$ with respect to the basis $ $ $\{e_1,e_2,\dots, e_{d_l} \}$. The size of $A_\pm$ is $d_l\times d_l$. 
We use the same notation $D$ as in \S\ref{sec:algH}, and suppose that $A_\pm$ and $D$ were numerically obtained. 
Set
\[
A_\pm=\begin{pmatrix}\ba_1^\pm&\ba_2^\pm&\cdots&\ba_{d_l}^\pm \end{pmatrix}.
\]
By simplifying the matrix $A_\pm$, we obtain a basis of $\mathrm{Im}(\mathscr{A}_l)$ as
\[
\ba_{i_1}^\pm, \ba_{i_2}^\pm,\dots,\ba_{i_{c_\pm}}^\pm \qquad (i_1<i_2<\cdots<i_{c_\pm} , \quad  c_\pm=\mathrm{rank}(A_\pm)) .
\]
Set $B_\pm\coloneqq  \begin{pmatrix} \ba_{i_1}^\pm& \ba_{i_2}^\pm&\cdots&\ba_{i_{c_\pm}}^\pm \end{pmatrix}$, and we have an isomorphism $T_{B_\pm} \colon \Q^{c_\pm} \to \mathrm{Im}(\mathscr{A}_{l,\pm})$, $T_{B_\pm}(x)=B_\pm x$. 
Consider the composition
\[
\Delta_l\circ T_{B_\pm} \colon \Q^{c_\pm} \to \Q^{d_{l-2}} ,\quad \Delta_l\circ T_{B_\pm}(x)=DB_\pm x .
\]
One can compute a basis of $\mathrm{Ker}(\Delta_l\circ T_{B_\pm})$ and we denote it by $\bb_1^\pm$, $\bb_2^\pm,\dots,\bb_{t_\pm}^\pm$ where $t_\pm=c_\pm-\mathrm{rank}(DB_\pm)$.
Therefore, a basis of $\mathrm{Im}(\mathscr{A}_{l,\pm}) \, \cap \, \mathrm{Ker}(\Delta_l)$ is given by $B_\pm\bb_1^\pm$, $B_\pm\bb_2^\pm,\dots,B_\pm\bb_{t_\pm}^\pm$. 
Thus, a basis of $\cH_{l,\pm}^\Gamma$ is obtained by $\sF_l^{-1}(B_\pm\bb_1^\pm)$, $\sF_l^{-1}(B_\pm\bb_2^\pm),\dots,\sF_l^{-1}(B_\pm\bb_{t_\pm}^\pm)$.
This completes the description of our algorithm.

\subsection{List of elements of $\Gamma$ and $\Gamma\gamma_2$}\label{sec:list}

A list of elements of $\Gamma$ and $\Gamma\gamma_2$ is written here. 
It is necessary to input them for the program to compute the average mapping.
The elements of $\Gamma$ are as follows:
\[
1=\begin{pmatrix} 1&0&0 \\ 0&1&0 \\ 0&0&1 \end{pmatrix} ,\quad \mathfrak{w}=\begin{pmatrix} 0&0&1 \\ 1&0&0 \\ 0&1&0 \end{pmatrix} ,\quad \mathfrak{w}^2=\begin{pmatrix} 0&1&0 \\ 0&0&1 \\ 1&0&0 \end{pmatrix},
\]
\[
\mathfrak{i}=\begin{pmatrix} -1&-1&-1 \\ 0&0&1 \\ 0&1&0 \end{pmatrix} ,\quad \mathfrak{i}\mathfrak{w}=\begin{pmatrix} -1&-1&-1 \\ 0&1&0 \\ 1&0&0 \end{pmatrix},\quad \mathfrak{i}\mathfrak{w}^2=\begin{pmatrix} -1&-1&-1 \\ 1&0&0 \\ 0&0&1 \end{pmatrix},
\]
\[
\mathfrak{j}=\begin{pmatrix} 0&0&1 \\ -1&-1&-1 \\ 1&0&0 \end{pmatrix}, \quad
\mathfrak{j}\mathfrak{w}=\begin{pmatrix} 0&1&0 \\ -1&-1&-1 \\ 0&0&1 \end{pmatrix}, \quad
\mathfrak{j}\mathfrak{w}^2=\begin{pmatrix} 1&0&0 \\ -1&-1&-1 \\ 0&1&0 \end{pmatrix}, 
\]
\[
\mathfrak{i}\mathfrak{j}=\begin{pmatrix} 0&1&0 \\ 1&0&0 \\ -1&-1&-1 \end{pmatrix}, \quad
\mathfrak{i}\mathfrak{j}\mathfrak{w}=\begin{pmatrix} 1&0&0 \\ 0&0&1 \\ -1&-1&-1 \end{pmatrix}, \quad
\mathfrak{i}\mathfrak{j}\mathfrak{w}^2=\begin{pmatrix} 0&0&1 \\ 0&1&0 \\ -1&-1&-1 \end{pmatrix}.
\]
The elements of $\Gamma\gamma_2$ are as follows:
\[
\gamma_2=\begin{pmatrix} 0&-1&0 \\ -1&0&0 \\ 0&0&-1 \end{pmatrix} ,\quad 
\mathfrak{w}\gamma_2=\begin{pmatrix} 0&0&-1 \\ 0&-1&0 \\ -1&0&0 \end{pmatrix} ,\quad 
\mathfrak{w}^2\gamma_2=\begin{pmatrix} -1&0&0 \\ 0&0&-1 \\ 0&-1&0 \end{pmatrix},
\]
\[
\mathfrak{i}\gamma_2=\begin{pmatrix} 1&1&1 \\ 0&0&-1 \\ -1&0&0 \end{pmatrix} ,\quad 
\mathfrak{i}\mathfrak{w}\gamma_2=\begin{pmatrix} 1&1&1 \\ -1&0&0 \\ 0&-1&0 \end{pmatrix},\quad 
\mathfrak{i}\mathfrak{w}^2\gamma_2=\begin{pmatrix} 1&1&1 \\ 0&-1&0 \\ 0&0&-1 \end{pmatrix},
\]
\[
\mathfrak{j}\gamma_2=\begin{pmatrix} 0&0&-1 \\ 1&1&1 \\ 0&-1&0 \end{pmatrix}, \quad
\mathfrak{j}\mathfrak{w}\gamma_2=\begin{pmatrix} -1&0&0 \\ 1&1&1 \\ 0&0&-1 \end{pmatrix}, \quad
\mathfrak{j}\mathfrak{w}^2\gamma_2=\begin{pmatrix} 0&-1&0 \\ 1&1&1 \\ -1&0&0 \end{pmatrix}, 
\]
\[
\mathfrak{i}\mathfrak{j}\gamma_2=\begin{pmatrix} -1&0&0 \\ 0&-1&0 \\ 1&1&1 \end{pmatrix}, \quad
\mathfrak{i}\mathfrak{j}\mathfrak{w}\gamma_2=\begin{pmatrix} 0&-1&0 \\ 0&0&-1 \\ 1&1&1 \end{pmatrix}, \quad
\mathfrak{i}\mathfrak{j}\mathfrak{w}^2\gamma_2=\begin{pmatrix} 0&0&-1 \\ -1&0&0 \\ 1&1&1 \end{pmatrix}.
\]

\section{Application to non-vanishing theorems}\label{sec:4}

In this section, we present an application of our computational results (cf. Appendix \ref{sec:app}).  
%First, we recall some results in \cite{CW23}. 

From \S\ref{sec:Q} we have
\[
\Nm(x)=\begin{pmatrix}
    x_1&x_2&x_3
\end{pmatrix}Q\begin{pmatrix}
    x_1\\ x_2\\x_3
\end{pmatrix}=3(x_1^2+x_2^2+x_3^2)-2(x_1x_2+x_2x_3+x_3x_1), 
\]
where $x=x_1\bb_1+x_2\bb_2+x_3\bb_3\in V_D$. 
Let $E$ be an imaginary quadratic field over $\Q$, and denote by $\Delta_E$ the discriminant of $E$. 
When $x_E \in L(\cO)$ satisfies $\Nm(x_E)=-\Delta_E$, the point $x_E$ is said to be a CM point of $E$.

Suppose that $\Delta_E\equiv 5 \mod 8$. 
Let $h_E$ denote the class number of $E$ and $\fo_E$ the integer ring of $E$. 
By \cite{CW23}*{\S 3 and \S 4} there exist CM points $a_j$ $(1\le j\le h_E)$ of $E$ such that, for any $f\in\cH_l^\Gamma$, the finite sum
\[
\mathfrak{P}_E(f) \coloneqq \sum_{j=1}^{h_E} f(a_j) 
\]
equals Waldspurger's toric period of $f$ up to non-zero constant. 
%Let $h_E$ denote the class number of $E$, $\fo_E$ the integer ring of $E$, and $\mathrm{Cl}_E$ the ideal class group of $\fo_E$.  By \cite{CW23}*{(4.4)}, there exists an optimal embedding $\iota_0\colon \fo_E \to \cO$. Then, we obtain a $\mathrm{Cl}_E$-orbit $\{(\iota_j,a_j)\}_{1\le j\le h_E}$ of $\iota_0$, where $\iota_j$ is an optimal embedding of $\fo_E$ and $a_j$ is an element in $\iota_j(E)\cap L(\cO)$ such that $\Nm(a_j)=-\Delta_E$, cf. \cite{CW23}*{\S 3.3}.  By \cite{CW23}*{Lemma 3.4}, for any $f\in\cH_l^\Gamma$, the finite sum\[\mathfrak{P}_E(f) \coloneqq \sum_{j=1}^{h_E} f(a_j) \]equals Waldspurger's toric period of $f$ up to non-zero constant. 
Hence, if $f$ is a Hecke eigenform, then $\mathfrak{P}_E(f)\neq 0$ implies $L(\tfrac{1}{2},f,\eta_E)\neq 0$.
Set $a_j=a_{j1}\bb_1+a_{j2}\bb_2+a_{j3}\bb_3$ $(a_{j1}$, $a_{j2}$, $a_{j3}\in\Z)$. 
Since $1\equiv \Delta_E=-\Nm(a_j)\equiv (a_{j1}+a_{j2}+a_{j3})^2 \mod 4$, we obtain 
\begin{equation}\label{eq:j1j2j3}
    a_{j1}+a_{j2}+a_{j3}\equiv 1\mod 2.  
\end{equation}

Now, the space $\cH^\Gamma_{3,+}$ is generated by a single polynomial $f_{3,+}$, which is given by 
\[
f_{3,+}(x)=x_1^3+x_2^3+x_3^3-x_1^2x_2-x_1^2x_3-x_2^2x_1-x_2^2x_3-x_3^2x_1-x_3^2x_2 +2x_1x_2x_3. 
\]
This is computed in \cite{CW23} and Appendix \ref{sec:app}. 
By \eqref{eq:j1j2j3} and a direct calculation, we see that
\begin{equation}\label{eq:f_{3,+}}
f_{3,+}(a_j)\equiv (a_{j1}+a_{j2}+a_{j3})^3\equiv 1 \mod 2.    
\end{equation}
Suppose that $\Delta_E$ is a prime number. 
Then, $h_E$ is odd.
Hence, we obtain $\mathfrak{P}_E(f_{3,+})\equiv 1\mod 2$, which means $\mathfrak{P}_E(f_{3,+})\neq 0$. 
Since $f_{3,+}$ is a Hecke eigenform by $\cH_{3,+}^\Gamma=\langle f_{3,+}\rangle$, we have $L(\tfrac{1}{2},f_{3,+},\eta_E)\neq 0$ if $\Delta_E\equiv 5 \mod 8$ and $\Delta_E$ is a prime number. 
Note that we can replace the condition $\Delta_E\equiv 5 \mod 8$ by $\eps(\tfrac{1}{2},f_{3,+},\eta_E)=1$, see \eqref{eq:epscong}. 
This non-vanishing theorem was proved in \cite{CW23}*{\S 5}, and several phenomena similar to \eqref{eq:f_{3,+}} were observed there.
Our purpose is to confirm similar phenomena by explicitly computing algebraic modular forms of high degree using our algorithm in \S\ref{sec:3}.

Let us discuss the higher degrees $l$ below.
Unlike the above case $l=3$, Masataka Chida pointed out to us that in the case of even degree $l$, it is sufficient to directly compare the norm $\Nm(x)^{l/2}$ with the algebraic modular forms in order to obtain a congruence relation similar to \eqref{eq:f_{3,+}}. 
So, let us consider only the case that $l$ is even. 
A first example is the case $l=4$.
The space $\cH_{4,+}^\Gamma$ is generated by a single polynomial $F_4=3\, f_{4,+}$, which is given as
\begin{multline*}
F_4(x)=3(x_1^4+x_2^4+x_3^4)+12(x_1^2x_2x_3+x_1x_2^2x_3+x_1x_2x_3^2) \\
-4(x_1^3x_2+x_1^3x_3+x_2^3x_1+x_2^3x_3+x_3^3x_1+x_3^3x_2)-6(x_1^2x_2^2+x_1^2x_3^2+x_2^2x_3^2),
\end{multline*}
cf. Appendix \ref{sec:app}. 
%Hence, it is easy to see\[F_4(x_E)\equiv (a_1+a_2+a_3)^4 \equiv 1 \mod 2\]if $\Delta_E\equiv 5 \mod 8$. Thus, we obtain the same non-vanishing theorem for $F_4$ as well as $f_{3,+}$. 
By a direct calculation, we obtain
\begin{equation}\label{eq:diffl4}
3\, F_4(x)-\Nm(x)^2 = 40\times \{(x_1^2x_2x_3+x_1x_2^2x_3+x_1x_2x_3^2) - (x_1^2x_2^2+x_1^2x_3^2+x_2^2x_3^2)\}.  \end{equation}
Hence, $F_4(x)\equiv \Nm(x)^2 \mod 2$, and then we obviously get $F_4(a_j)\equiv \Nm(a_j)^2 \equiv 1 \mod 2$ if $\Delta_E\equiv 5 \mod 8$. 
%It is possible to perform computations similar to \eqref{eq:diffl4} with a computer. \textcolor{red}{NOT SIMILAR! $l=30$, $38$.} 
Using the database \cite{OWY24} and the same argument as \eqref{eq:diffl4}, we obtain the following theorem. 
\begin{thm}\label{thm:congr}
    Assume that $4\le l \le 40$ and $l$ is even.
    Then, there exists an algebraic modular form $f_{l,+}\in \cH_{l,+}^\Gamma$ with integer coefficients so that $f_{l,+}(x)\equiv \Nm(x)^{l/2} \mod 2$. 
\end{thm}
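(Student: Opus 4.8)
The plan is to reduce the theorem to a finite, characteristic-$2$ linear-algebra verification, guided by the template identity \eqref{eq:diffl4} for $l=4$. First I would pin down the mod-$2$ target. Since $\Nm(x)=3(x_1^2+x_2^2+x_3^2)-2(x_1x_2+x_2x_3+x_3x_1)$ has integer coefficients and is $\SO(Q)$-invariant, $\Nm(x)^{l/2}$ is an integral, $\Gamma$-invariant homogeneous polynomial of degree $l$ in $\cA_l$; because $\gamma_2\in\SO(Q)$ also fixes $\Nm$ and $(-1)^l=1$, it lies on the $T_2$-eigenvalue $+1$ side that defines $\cH_{l,+}^\Gamma$. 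Reducing modulo $2$ kills the cross terms $2x_ix_j$, so $\Nm(x)\equiv x_1^2+x_2^2+x_3^2\equiv(x_1+x_2+x_3)^2\pmod 2$, whence $\Nm(x)^{l/2}\equiv s^l\pmod 2$ with $s=x_1+x_2+x_3$. Thus the theorem is equivalent to the assertion that, for each even $l$ in the range, $s^l\bmod 2$ lies in the image of $\cH_{l,+}^\Gamma\cap\cA_l(\Z)$ under reduction modulo $2$.

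Next I would explain why $s^l$ is the correct mod-$2$ shadow of a $+$-form, which is the conceptual core. Each generator of $\Gamma$ and the element $\gamma_2$ act on the row $(x_1,x_2,x_3)$ by an integer matrix each of whose \emph{rows} sums to an odd integer, as one checks directly against the list in \S\ref{sec:list}; since the coefficient of $x_i$ in $s(x\gamma)$ is the $i$-th row sum of $\gamma$, every such element fixes $s$ modulo $2$, so $s^l\bmod 2$ is invariant under $\Gamma$ and $\gamma_2$. Moreover $s^l$ is harmonic modulo $2$: the integral operator $2\Delta_Q=\partial_1^2+\partial_2^2+\partial_3^2+\partial_1\partial_2+\partial_2\partial_3+\partial_3\partial_1$ has $\partial_i^2\equiv 0\pmod 2$ on integer polynomials (because $\partial_i^2 x_i^n=n(n-1)x_i^{n-2}$ with $n(n-1)$ even), so modulo $2$ it is just $\partial_1\partial_2+\partial_2\partial_3+\partial_3\partial_1$, and this applied to $s^l$ gives $3\,l(l-1)s^{l-2}\equiv 0\pmod 2$ since $l$ is even. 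Hence $s^l\bmod 2$ lies in the reduction of $\ker(2\Delta_Q)$ intersected with the $(\Gamma,\gamma_2)$-invariants, exactly the space in which reductions of elements of $\cH_{l,+}^\Gamma(\Z)$ must live.

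Finally comes the lifting step, which is where the obstacle sits. Being a mod-$2$ harmonic invariant does not by itself force $s^l$ to be the reduction of an \emph{integral} element of $\cH_{l,+}^\Gamma$: over $\Q$ one has the orthogonal splitting $\cA_l=\cH_l\oplus\Nm\cdot\cA_{l-2}$, and $\Nm(x)^{l/2}=\Nm\cdot\Nm^{l/2-1}$ lies entirely in the second summand, so its genuine harmonic projection is $0$ and the classical projection formula carries $2$-power denominators. There is therefore no uniform closed-form lift; the content is precisely that the reduction map $\cH_{l,+}^\Gamma(\Z)\to\cA_l(\F_2)$ nevertheless hits $s^l$, a characteristic-$2$ congruence caused by the $\Q$-direct sum degenerating modulo $2$. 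I would establish this for each even $l$ with $4\le l\le 40$ by the algorithm of \S\ref{sec:3}: compute an integer-coefficient basis $g_1,\dots,g_d$ of $\cH_{l,+}^\Gamma$ (clearing denominators in the output), reduce the $g_i$ and $s^l$ modulo $2$, and solve the resulting linear system over $\F_2$; any solution $f_{l,+}=\sum_i c_i g_i$ with $c_i\in\{0,1\}$ yields the desired form, and the database \cite{OWY24} records one for every $l$ in the range. The explicit identity \eqref{eq:diffl4}, namely $3F_4-\Nm(x)^2=40\{\cdots\}$, is exactly this verification for $l=4$. The main obstacle is that no single algebraic manipulation covers all $l$ simultaneously: each degree requires its own finite linear solve over $\F_2$, which is why the statement is confined to a bounded range and proved with the aid of the database rather than by a uniform argument.
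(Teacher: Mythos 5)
Your proposal takes essentially the same route as the paper: the statement is verified degree by degree as a finite congruence check against the computed integral basis in the database, with the $l=4$ identity \eqref{eq:diffl4} as the template; the extra framing (reducing to $s^l$ with $s=x_1+x_2+x_3$, and checking that $s^l$ is $(\Gamma,\gamma_2)$-invariant and killed by $2\Delta_Q$ modulo $2$) is correct and a nice consistency check, but it is not needed for, and does not replace, the per-degree verification. One procedural caveat: solving over $\F_2$ with coefficients $c_i\in\{0,1\}$ relative to a cleared-denominator basis $g_1,\dots,g_d$ only searches the reduction of the $\Z$-span of the $g_i$, which may sit with even index inside the full lattice $\cH_{l,+}^\Gamma\cap\cA_l(\Z)$; this actually bites for $l=30$ and $l=38$, where the paper must pass to the half-integral combinations $(f^{(1)}_{30,+}-f^{(3)}_{30,+})/2$ and $(f^{(1)}_{38,+}-3f^{(3)}_{38,+})/2$, so your algorithm should first saturate the lattice (or allow such half-integral combinations) before the $\F_2$-solve.
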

\begin{rem}
\begin{itemize}
    \item We expect that this assertion holds for any even degree $l$. 
    \item Not all algebraic modular forms satisfy the congruence relation with $\Nm(x)^{l/2}$. 
    For example, it follows from \eqref{eq:j1j2j3} that $f^{(2)}_{12,+}(a_j)\equiv 0$ when $\Delta_E\equiv 5 \mod 8$ (see Appendix \ref{sec:app} for $f^{(2)}_{12,+}$).
\end{itemize}
\end{rem}
\begin{proof}
    We write $f^{(1)}_{l,+}$, $f^{(2)}_{l,+}, \dots, f^{(d)}_{l,+}$ $(d=\dim\cH^\Gamma_{l,+})$ for the basis of $\cH^\Gamma_{l,+}$ in the database \cite{OWY24} in order from the front. 
    Except $l=30$ and $38$, by a computer calculation we can check that there exists an odd integer $c$ such that all coefficients of $c\, f^{(1)}_{l,+}(x)$ are integers and the relation $f^{(1)}_{l,+}(x) \equiv \Nm(x)^{l/2}$ holds. 
    As for the cases $l=30$ and $38$, by a computer calculation, we can see that all coefficients of $(f^{(1)}_{30,+}(x)-f^{(3)}_{30,+}(x))/2$ and $(f^{(1)}_{38,+}(x)-3\, f^{(3)}_{38,+}(x))/2$ are integers, and we have the relations
    \[
    \frac{f^{(1)}_{30,+}(x)-f^{(3)}_{30,+}(x)}{2}\equiv \Nm(x)^{15} \mod 2, \qquad \frac{f^{(1)}_{38,+}(x)-3\, f^{(3)}_{38,+}(x)}{2}\equiv \Nm(x)^{19} \mod 2.
    \]
    Therefore, the proof is completed. 
\end{proof}

By using Theorem \ref{thm:congr}, we obtain the following. 
\begin{thm}\label{thm:nonvani}
    Suppose that $4\le l \le 40$, $l$ is even, $f$ is a Hecke eigenform in $\cH_{l,+}^\Gamma\otimes\mathbb{F}_{l,+}$, and $\Delta_E$ is a prime number. 
    If $\eps(\tfrac{1}{2},f,\eta_E)=1$, then we have $L(\frac12,f,\eta_E)\neq 0$.  
\end{thm}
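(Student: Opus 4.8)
The plan is to reduce the non-vanishing of $L(\tfrac12,f,\eta_E)$ to the non-vanishing of the toric period $\mathfrak{P}_E(f)$, and then to deduce the latter for an \emph{arbitrary} eigenform from the single witness form supplied by Theorem \ref{thm:congr} together with the Galois-orbit structure of $\cH_{l,+}^\Gamma$. First I would record the two reductions that are already available in \S\ref{sec:4}: since $f\in\cH_{l,+}^\Gamma$, the hypothesis $\eps(\tfrac12,f,\eta_E)=1$ is equivalent to $\Delta_E\equiv 5\bmod 8$ by \eqref{eq:epscong}; and by \cite{CW23} one has, for any Hecke eigenform $g$, the implication $\mathfrak{P}_E(g)\neq 0\Rightarrow L(\tfrac12,g,\eta_E)\neq 0$. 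Hence it suffices to prove $\mathfrak{P}_E(f)\neq 0$.

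Next I would run the mod $2$ argument on the witness. By Theorem \ref{thm:congr} choose $f_{l,+}\in\cH_{l,+}^\Gamma$ with integer coefficients and $f_{l,+}(x)\equiv\Nm(x)^{l/2}\bmod 2$. Since $\Delta_E\equiv 5\bmod 8$ is an odd prime, each CM point $a_j$ satisfies $\Nm(a_j)=-\Delta_E$ odd, whence $f_{l,+}(a_j)\equiv\Nm(a_j)^{l/2}\equiv 1\bmod 2$; summing over $j$ gives $\mathfrak{P}_E(f_{l,+})\equiv h_E\bmod 2$. As $\Delta_E$ is prime the class number $h_E$ is odd, so $\mathfrak{P}_E(f_{l,+})$ is odd and in particular $\mathfrak{P}_E(f_{l,+})\neq 0$. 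This is exactly the computation already carried out for $l=3,4$ earlier in this section.

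The remaining and genuinely delicate step is to pass from $\mathfrak{P}_E(f_{l,+})\neq 0$ to $\mathfrak{P}_E(f)\neq 0$ for every eigenform $f$. The functional $\mathfrak{P}_E$ is Galois-equivariant: because the $a_j$ have integer coordinates, $\mathfrak{P}_E(\sigma g)=\sigma(\mathfrak{P}_E(g))$ for all $\sigma\in\Gal(\mathbb{F}_{l,+}/\Q)$, so the period is nonzero on one eigenform of a Galois orbit if and only if it is nonzero on the whole orbit. Writing $f_{l,+}=\sum_k\alpha_k g_k$ in an eigenbasis, $\mathfrak{P}_E(f_{l,+})\neq 0$ forces $\mathfrak{P}_E(g_{k_0})\neq 0$ for at least one $g_{k_0}$, hence for its entire Galois orbit. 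To conclude for \emph{every} $f$ I would invoke the structural fact --- to be verified case by case from the database \cite{OWY24}, e.g. by checking that the characteristic polynomial of some Brandt matrix $T_p$ on $\cH_{l,+}^\Gamma$ is irreducible over $\Q$ --- that for each even $l$ with $4\le l\le 40$ the space $\cH_{l,+}^\Gamma$ consists of a single Galois orbit (a Maeda-type phenomenon, cf.\ \S\ref{sec:GOJL}). Then $g_{k_0}$ and $f$ lie in the same orbit, so $\mathfrak{P}_E(f)\neq 0$, and the theorem follows. The main obstacle is precisely this last input: the single witness $f_{l,+}$ only certifies non-vanishing on one orbit, so if some $\cH_{l,+}^\Gamma$ broke into several orbits one would instead have to exhibit a separate form of nonzero period in each orbit.
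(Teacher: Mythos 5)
Your proposal is correct and follows essentially the same route as the paper: the mod-$2$ congruence from Theorem \ref{thm:congr} makes $\mathfrak{P}_E(f_{l,+})$ odd hence nonzero, the Galois-equivariance of $\mathfrak{P}_E$ propagates non-vanishing from one eigenform to its whole orbit (this is exactly the argument the paper imports from \cite{SWY22}*{Proof of Proposition 4.3}), and the single-Galois-orbit fact is verified computationally. The only cosmetic difference is that the paper checks the single-orbit claim via LMFDB through the Jacquet--Langlands correspondence rather than directly from the Brandt matrices as you suggest.
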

\begin{rem}
    In the following proof of Theorem \ref{thm:nonvani}, we use the fact that the number of Galois orbits in $S_{2l+2}^{\new,+}(\Gamma_0(2))$ is $1$ for every $l$ $(4\le l \le 40)$.  
    A generalization of Maeda's conjecture suggests that, for sufficiently large $l$, it should be always satisfied, see \cite{Tsa14}*{Conjecture 2.4}. 
\end{rem}
\begin{proof}
By Theorem \ref{thm:congr}, there is an algebraic modular form $f_{l,+}$ in $\cH_{l,+}^\Gamma$ such that $f_{l,+}(x)\equiv \Nm(x) \mod 2$. 
Since $h_E$ is odd and $\Nm(a_j)\equiv 1\mod 2$, we have $\mathfrak{P}_E(f_{l,+})\equiv 1\mod 2$, hence we otain $\mathfrak{P}_E(f_{l,+})\neq 0$. 
Since the Galois orbits in $S_{2l+2}^{\new,+}(\Gamma_0(2))$ agree with those in $\cH_{l,+}^\Gamma\otimes\mathbb{F}_{l,+}$, see \S\ref{sec:GOJL}, we can confirm that, for each even integer $l$ $(4\le l\le 40)$, the space $\cH_{l,+}^\Gamma\otimes\mathbb{F}_{l,+}$ is spanned by a single Galois orbit of a Hecke eigenfrom by using LMFDB \cite{lmfdb}. 
For each weight, a link to the site of the Galois orbit is pasted below: 
\[
\text{Weights $2l+2=$ \href{http://www.lmfdb.org/ModularForm/GL2/Q/holomorphic/2/10/a/a/}{$10$}, \href{http://www.lmfdb.org/ModularForm/GL2/Q/holomorphic/2/14/a/b/}{$14$}, \href{http://www.lmfdb.org/ModularForm/GL2/Q/holomorphic/2/18/a/a/}{$18$}, \href{http://www.lmfdb.org/ModularForm/GL2/Q/holomorphic/2/22/a/b/}{$22$}, \href{http://www.lmfdb.org/ModularForm/GL2/Q/holomorphic/2/26/a/b/}{$26$}, \href{http://www.lmfdb.org/ModularForm/GL2/Q/holomorphic/2/30/a/b/}{$30$}, \href{http://www.lmfdb.org/ModularForm/GL2/Q/holomorphic/2/34/a/b/}{$34$}, \href{http://www.lmfdb.org/ModularForm/GL2/Q/holomorphic/2/38/a/b/}{$38$}, \href{http://www.lmfdb.org/ModularForm/GL2/Q/holomorphic/2/42/a/b/}{$42$}, \href{http://www.lmfdb.org/ModularForm/GL2/Q/holomorphic/2/46/a/b/}{$46$}, \href{http://www.lmfdb.org/ModularForm/GL2/Q/holomorphic/2/50/a/b/}{$50$}, \href{http://www.lmfdb.org/ModularForm/GL2/Q/holomorphic/2/54/a/b/}{$54$}, \href{http://www.lmfdb.org/ModularForm/GL2/Q/holomorphic/2/58/a/b/}{$58$}, \href{http://www.lmfdb.org/ModularForm/GL2/Q/holomorphic/2/62/a/b/}{$62$}, \href{http://www.lmfdb.org/ModularForm/GL2/Q/holomorphic/2/66/a/b/}{$66$}, \href{http://www.lmfdb.org/ModularForm/GL2/Q/holomorphic/2/70/a/b/}{$70$}, \href{http://www.lmfdb.org/ModularForm/GL2/Q/holomorphic/2/74/a/b/}{$74$}, \href{http://www.lmfdb.org/ModularForm/GL2/Q/holomorphic/2/78/a/b/}{$78$}, \href{http://www.lmfdb.org/ModularForm/GL2/Q/holomorphic/2/82/a/b/}{$82$}.}
\]
Hence, we can apply the argument in \cite{SWY22}*{Proof of Proposition 4.3}, and then for each Hecke eigenform $f\in\cH_{l,+}^\Gamma\otimes \mathbb{F}_{l,+}$, we obtain $\mathfrak{P}_E(f)\neq 0$. 
Since $\eps(\tfrac{1}{2},f,\eta_E)=1$ is equivalent to $\Delta_E\equiv 5\mod 8$ by \eqref{eq:epscong}, the proof is completed.
\end{proof}

\section{Factorization}\label{sec:fac}

%Using it and \textsf{Magma} \cite{BCP}, we obtain the following factorization results and observations:
In Appendix \ref{sec:app}, we explicitly describe algebraic modular forms $f_{l,\pm}$, which form a basis of $\cH^\Gamma_{l,\pm}$ $(3\le l\le 12)$. 
Based on our computational results (cf. Appendix \ref{sec:app} and \cite{OWY24}), we find the factrization of $f_{3,+}$, $f_{6,-}$, $f_{9,-}$ as
\[
f_{3,+}(x)=-(-x_1 + x_2 + x_3)(x_1 - x_2 + x_3)(x_1 + x_2 - x_3),
\]
\[
f_{6,-}(x)=x_1x_2x_3(x_1 - x_2)(x_2 - x_3)(x_3 - x_1), \qquad  f_{9,-}=f_{3,+} \, f_{6,-}, 
\]
and the following two predictions, which we prove here. 
\begin{prop}\label{prop:div}
\begin{itemize}
    \item[(1)] When $l$ odd, arbitrary polynomial in $\cH^\Gamma_l$ is divided by $f_{3,+}$.   
    \item[(2)] Arbitrary polynomial in $\cH^\Gamma_{l,-}$ is divided by $f_{6,-}$. 
\end{itemize}    
\end{prop}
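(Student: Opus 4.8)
The plan is to prove both divisibility statements by one geometric mechanism: a homogeneous polynomial is divisible by a product of pairwise non-proportional linear forms as soon as it vanishes on each of the corresponding hyperplanes, and such vanishing can be forced by a symmetry acting as $-1$ on the hyperplane. Concretely, suppose $\sigma\in\SO(Q)$ is an order-two element (a $180^\circ$ rotation) whose $(-1)$-eigenspace is a plane $H=\{\ell=0\}$, so that $p\,\sigma=-p$ for every $p\in H$. If $g\in\cA_l$ is homogeneous of degree $l$ and transforms by $\sigma\cdot g=c\,g$ for a scalar $c$, then for $p\in H$ one gets $c\,g(p)=(\sigma\cdot g)(p)=g(p\,\sigma)=g(-p)=(-1)^l g(p)$; hence $g$ vanishes on $H$ whenever $c\neq(-1)^l$, and then the irreducible linear form $\ell$ divides $g$. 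The rest is bookkeeping of which rotations to use and computing their eigenplanes.

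For (1) I would use the three non-identity elements $\mathfrak{i},\mathfrak{j},\mathfrak{i}\mathfrak{j}$ of the Klein four subgroup of $\Gamma$. A direct eigenvector computation shows each is a $180^\circ$ rotation whose $(-1)$-eigenplane is exactly one of the planes cut out by the linear factors of $f_{3,+}$: writing $L_1=-x_1+x_2+x_3$, $L_2=x_1-x_2+x_3$, $L_3=x_1+x_2-x_3$, one finds $\mathfrak{i},\mathfrak{j},\mathfrak{i}\mathfrak{j}$ act as $-1$ on $\{L_1=0\},\{L_2=0\},\{L_3=0\}$ respectively. For $g\in\cH_l^\Gamma$ and these $\sigma\in\Gamma$ we have $\sigma\cdot g=g$, so $c=1$; if $l$ is odd then $1\neq-1=(-1)^l$, and the mechanism gives $L_i\mid g$ for $i=1,2,3$. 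As the $L_i$ are pairwise non-proportional, hence coprime, $L_1L_2L_3\mid g$, i.e. $f_{3,+}\mid g$.

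For (2) the one new input is the transformation law of $g\in\cH_{l,-}^\Gamma$ under $\fT_2$. Since $\fT_2=\Gamma\gamma_2$, the operator $T_2$ is the action of $\gamma_2$, so for any $\delta\gamma_2\in\fT_2$ the left-action identity and $\Gamma$-invariance give $(\delta\gamma_2)\cdot g=\delta\cdot(T_2 g)=-(-1)^l g$; that is, all of $\fT_2$ acts on $g$ by the single scalar $c=-(-1)^l$, which is never equal to $(-1)^l$. It therefore suffices to exhibit inside $\fT_2$ six $180^\circ$ rotations whose $(-1)$-eigenplanes are the six hyperplanes of $f_{6,-}$, namely $\{x_i=0\}$ and $\{x_i=x_j\}$. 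The six order-two elements of $\fT_2$ (the class $[\gamma_2]$, of cardinality $6$) are precisely these: computing eigenplanes gives $\{x_1=x_2\},\{x_1=x_3\},\{x_2=x_3\}$ for $\gamma_2,\mathfrak{w}\gamma_2,\mathfrak{w}^2\gamma_2$, and — the less obvious point — $\{x_1=0\},\{x_2=0\},\{x_3=0\}$ for $\mathfrak{i}\mathfrak{w}^2\gamma_2,\mathfrak{j}\mathfrak{w}\gamma_2,\mathfrak{i}\mathfrak{j}\gamma_2$. Applying the mechanism to each shows $g$ vanishes on all six planes, and since these linear forms are pairwise coprime their product $x_1x_2x_3(x_1-x_2)(x_2-x_3)(x_3-x_1)=f_{6,-}$ divides $g$.

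The routine parts are the explicit eigenvector computations; the main conceptual obstacle is locating the coordinate hyperplanes $\{x_i=0\}$ as eigenplanes. They arise neither from rotations in $\Gamma$ (whose order-two elements yield the planes $\{L_i=0\}$) nor from the naive Vandermonde argument (which only yields $\{x_i=x_j\}$); one must notice that they appear exactly as the rotation planes of the ``edge'' involutions in the Hecke set $\fT_2$. It is the uniform $T_2$-eigenvalue $-(-1)^l$ that makes the vanishing work for every parity of $l$, in contrast to (1), where oddness of $l$ is essential.
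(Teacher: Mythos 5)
Your proof is correct, and it takes a genuinely different route from the paper's. The paper works inside the decomposition $\cA_l^\Gamma=\bigoplus_{0\le 2m\le l}\Nm(x)^m\,\cH^\Gamma_{l-2m}$ of \eqref{eq:dsd}--\eqref{eq:dsd2}: it shows that $\phi\mapsto\mathrm{Proj}_{\cH^\Gamma_{2m+3}}(f_{3,+}\phi)$ is injective by a unique-factorization argument, deduces bijectivity from the generating functions \eqref{eq:dim1} and \eqref{eq:dim2}, and then disposes of the non-harmonic components $\Nm(x)^{m'}\cH^\Gamma_{2m+3-2m'}$ of the product by induction on the degree (similarly with $f_{6,-}$ for part (2)). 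You instead recognize the linear factors of $f_{3,+}$ and $f_{6,-}$ as the defining forms of the $(-1)$-eigenplanes of the order-two rotations in $\Gamma$ and in $\fT_2$, and force vanishing of $g$ on each plane from $\sigma\cdot g=c\,g$ with $c\neq(-1)^{l}$; I verified the eigenplane computations for $\mathfrak i,\mathfrak j,\mathfrak i\mathfrak j$ (giving $\{L_i=0\}$) and for the six trace $-1$ elements of $\Gamma\gamma_2$ (giving $\{x_i=x_j\}$ and $\{x_i=0\}$), and the identification $T_2g=\gamma_2\cdot g$ together with the left-action identity does make the whole coset $\Gamma\gamma_2$ act by the single scalar $-(-1)^l$, as you claim. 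Your argument needs neither the dimension formulas of \S\ref{sec:2} nor induction, it actually proves the stronger statement that \emph{every} homogeneous $\Gamma$-invariant polynomial of odd degree (resp.\ every homogeneous polynomial with the $\cH^\Gamma_{l,-}$ transformation law), harmonic or not, is divisible by the relevant product of linear forms, and it explains a priori why $f_{3,+}$ and $f_{6,-}$ factor into linear forms at all. What the paper's method buys in exchange is the explicit isomorphisms $\mathscr F_3\colon\cH^\Gamma_{2m}\to\cH^\Gamma_{2m+3}$ and $\mathscr F_6\colon\cH^\Gamma_{l,+}\to\cH^\Gamma_{l+6,-}$ as a by-product, which quantify the divisibility statement; your approach establishes the divisibility without producing these maps.
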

\begin{rem}
Note that the space $\bigoplus_{l=0}^\infty \cH^\Gamma_l$ is not closed about the product unlike the ring of holomorphic modular forms, hence the assertions (1) and (2) are not obvious.  
In fact, the relation
\[
f_{7,+}(x)=f_{3,+}(x) \, \left\{ f_{4,+}(x) +\tfrac{16}{3}( - x_1^2 x_2^2 + x_1^2 x_2 x_3 - x_1^2 x_3^2 + x_1 x_2^2 x_3 +  x_1 x_2 x_3^2 - x_2^2 x_3^2) \right\} 
\]
means that $f_{3,+} f_{4,+}$ is not harmonic, namely, $f_{3,+} f_{4,+}\notin\cH^\Gamma_7$. 
We also note that $f_{4,+}$, $f_{6,+}$, $f_{8,+}$, $f_{10,+}$ are irreducible unlike $f_{3,+}$ and $f_{6,-}$.     
\end{rem}
\begin{proof}
Set $\cA_l^\Gamma\coloneqq \Im(\mathrm{Ave}_l)$ and $\cA_{l,\pm}^\Gamma\coloneqq \Im(\mathrm{Ave}_{l,\pm})$. 
Since $\Delta_Q$ and the action of $G$ are commutative, by \cite{Far08}*{Theorem 9.3.1} we have the direct sum decomposition
\begin{align}\label{eq:dsd}
\cA_l^\Gamma=&\bigoplus_{0\le 2m \le l} \Nm(x)^m \, \cH^\Gamma_{l-2m},    \\
\cA_{l,\pm}^\Gamma=&\bigoplus_{0\le 2m \le l} \Nm(x)^m \, \cH^\Gamma_{l-2m,\pm}.  \label{eq:dsd2}  
\end{align}

Let us consider the proof of the assertion (1). 
Write $\mathrm{Proj}_{\cH_l^\Gamma}$ (resp. $\mathrm{Proj}_{\cH_l^\Gamma}'$) for the projection from $\cA_l^\Gamma$ to $\cH_l^\Gamma$ (resp. $\bigoplus_{1\le 2m \le l} \Nm(x)^m \, \cH^\Gamma_{l-2m}$) by \eqref{eq:dsd}. 
A linear mapping $\mathscr{F}_3\colon \cH_{2m}^\Gamma\to \cH_{2m+3}^\Gamma$ is defined by
\[
\mathscr{F}_3(\phi)\coloneqq \mathrm{Proj}_{\cH_{2m+3}^\Gamma}(f_{3,+}\, \phi) , \qquad  \phi\in \cH_{2m}^\Gamma.
\]

Suppose that $\phi\in \cH_{2m}^\Gamma$ and $\mathrm{Proj}_{\cH_{2m+3}^\Gamma}(f_{3,+}\, \phi)=0$. 
Then, $\Nm(x)$ divides $f_{3,+}(x)\, \phi(x)$ by \eqref{eq:dsd}, hence $\Nm(x)$ divides $\phi(x)$ since the ring of polynomials of $x$ over $\Q$ is a unique factorization domain.
This implies $\phi=0$ by $\phi\in \cH_{2m}^\Gamma$ and \eqref{eq:dsd}.  
Therefore, $\mathscr{F}_3$ is injective. 
From this and \eqref{eq:dim1} we find that $\mathscr{F}_3$ is isomorphism.  
Hence, (1) is equivalent to (3) any polynomial in $\mathrm{Im}(\mathscr{F}_3)$ being divisible by $f_{3,+}$. 
Furthermore, (3) is equivalent to (4) $\mathrm{Proj}_{\cH_{2m+3}^\Gamma}'(f_{3,+}\, \phi)$ being divided by $f_{3,+}$ for any $\phi\in\cH^\Gamma_{2m}$.  
If the assertion (1) for $\cH^\Gamma_{2j-1}$ holds for an arbitrary $j$ $(1\le j\le m+1)$, then (4) follows from the definition of $\mathrm{Proj}_{\cH_{2m+3}^\Gamma}'$. 
Thus, we obtain the assertion (1) by the induction for $m$.

Let us consider the proof of the assertion (2). 
Write $\mathrm{Proj}_{\cH_{l,-}^\Gamma}$ for the projection from $\cA_{l,-}^\Gamma$ to $\cH_{l,-}^\Gamma$ by \eqref{eq:dsd2}. 
We use the mapping $\mathscr{F}_6\colon \cH_{l,+}\to \cH_{l+6,-}$ defined by
\[
\mathscr{F}_6(\phi)\coloneqq \mathrm{Proj}_{\cH_{l+6,-}^\Gamma}(f_{6,-}\, \phi) , \qquad  \phi\in \cH_{l,+}^\Gamma.
\]
By the same argument as above, one can prove that $\mathscr{F}_6$ is isomorphic by using \eqref{eq:dim2}. 
Therefore, we can obtain (2) by the same argument as in the proof of (1). 
\end{proof}

\section{Simpler expression by symmetric polynomials}\label{sec:refine}

In studying congruence relations of values of algebraic modular forms at CM points, it was important in the paper \cite{CW23} to take a basis (like $\bb_1$, $\bb_2$, $\bb_3$) of the lattice $L(\cO)$ to ensure the values are integers (cf. \S\ref{sec:4}). 
However, there is no problem in taking another basis of $V_D$ over $\Q$ instead of a basis of $L(\cO)$ to explicitly compute algebraic modular forms. 
In this special case, we have found a simpler expression of algebraic modular forms by another basis and symmetric polynomials. From this expression we find another algorithm for basis of $\cH^\Gamma_{l,\pm}$, which is faster than that of \S\ref{sec:3}. 
Note that the algorithm in \S\ref{sec:3} is more versatile and can be applied to other quaternion algebras.

Take a basis $i$, $j$, $ij$ of $V_D$. %and we identify $V_D$ with $\Q^{(3)}$ by $y_1i+y_2j+y_3ij\mapsto (y_1,y_2,y_3)$. 
By the identification $y_1 i + y_2 j + y_3 ij = x_1 \bb_1 + x_1 \bb_2 + x_1 \bb_3$, we obtain a linear transformation
\begin{equation}\label{xy}
\begin{cases}
y_1 &= -x_1+x_2+x_3,\\
y_2 &= x_1-x_2+x_3,\\
y_3 &= x_1+x_2-x_3,
\end{cases}
\qquad\qquad
\begin{cases}
x_1 &=(y_2+y_3)/2,\\
x_2 &=(y_1+y_3)/2,\\
x_3 &=(y_1+y_2)/2. 
\end{cases}
\end{equation}
By $\Nm(y_1i+y_2j+y_3ij)=y_1^2+y_2^2+y_3^2 = y \t y$, $y=\begin{pmatrix}
    y_1&y_2&y_3
\end{pmatrix}$, the Laplace operator $\Delta_Y$ is given by
\[
 \Delta_Y 
= \frac{\partial^2}{\partial y_1^2} 
+ \frac{\partial^2}{\partial y_2^2} 
+ \frac{\partial^2}{\partial y_3^2}.
\]
In addition, a homomorphism $\cF_Y\colon D^\times\to \SO_3$ is defined by
\[
\cF_Y(g)=\gamma_Y \, \cF(g) \, \gamma_Y^{-1} ,\qquad \gamma_Y=\frac{1}{2} \begin{pmatrix}
    0&1&1 \\ 1&0&1 \\ 1&1&0
\end{pmatrix}
\]
where $\SO_3\coloneqq \{ g\in\SL_3(\Q)\mid g\t g=1_3\}$ ($1_3$ is the unit matrix of degree $3$). 
Then, $\cF_Y$ satisfies the same property as $\cF$ with respect to $y=(y_1,y_2,y_3)$. 
In particular, we see
\begin{equation}\label{eq:rep}
    \cF_Y(i)=\begin{pmatrix}
        1&0&0 \\ 0&-1&0 \\ 0&0&-1
    \end{pmatrix}, \quad \cF_Y(w)=\cF(w)=\mathfrak{w} , \quad  \cF_Y(i-j)=\cF(i-j)=\gamma_2.
\end{equation}

For any polynomial $f(x)$ for the variable $x=(x_1,x_2,x_3)$, by \eqref{xy} we identify $f(x)$ with the polynomial
\[
f(y)\coloneqq f( (y_2+y_3)/2, (y_1+y_3)/2,(y_1+y_2)/2)
\]
for the variables $y=(y_1,y_2,y_3)$, that is, $\Q[x_1,x_2,x_3]$ and $\Q[y_1,y_2,y_3]$ are identified by this correspondence. 
For example we have
\[
f_{3,+}(y)=-y_1 y_2 y_3, \quad  f_{6,-}(y)=(y_1^2-y_2^2)(y_1^2-y_3^2)(y_2^2-y_3^2).
\]
By this identification, $\cA_l$, $\cH_l$ and $\cH_l^\Gamma$ are regarded as spaces consisting of homogeneous polynomials of $y=(y_1,y_2,y_3)$.  
In the coordinate $y=(y_1,y_2,y_3)$, $\Gamma$ is identified with the group generated by $\cF_Y(i)$ and $\mathfrak{w}$ in $\SO_3$, and the Atkin-Lehner involution $T_2$ agrees with the action of $\gamma_2$, see \eqref{eq:rep}.

\newcommand{\InvRing}{\Q[y_1^2,y_2^2,y_3^2]^{\mathfrak{S}_3}}

We denote by $\InvRing$
the $\Q$-algebra of symmetric polynomials with variables $(y_1^2,y_2^2,y_3^2)$.
For non-negative even integer $l$, let $\InvRing_l$ denote %$\InvRing_l = \Q[y_1^2,y_2^2,y_3^2]_l \cap \InvRing$
the subspace consisting of homogeneous of degree $l$.
\begin{lem}\label{lemma1}
For non-negative even integer $m$,
\[
\mathcal{H}^\Gamma_{m+3\varepsilon_1+6 \varepsilon_2,\pm} 
= \mathrm{Ker}( \Delta_Y) \cap (f_{3,+}^{\varepsilon_1} f_{6,-}^{\varepsilon_2} \times \InvRing_{m}),
\]
where [$\varepsilon_1 = 0$ or $1$] and [$\varepsilon_2=0$ for plus, $\varepsilon_2=1$ for minus].
%For other $l$, we have $\mathcal{H}^\Gamma_{l,\pm}=\{0\}$.
\end{lem}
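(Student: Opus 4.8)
The plan is to reduce the statement to a purely algebraic description of the ring of $\Gamma$-invariants in the $y$-coordinates, and then to settle that description by a dimension count that draws on the formulas already obtained in \S\ref{sec:2}. First I would record that, read in the coordinate $y=(y_1,y_2,y_3)$, Lemma \ref{lem:pm} says precisely
\[
\cH_{l,\pm}^\Gamma=\mathrm{Ker}(\Delta_Y)\cap\cA_{l,\pm}^\Gamma,\qquad \cA_{l,\pm}^\Gamma=\mathrm{Im}(\mathrm{Ave}_{l,\pm}),
\]
and that, since $T_2$ acts by $\gamma_2$ (see \eqref{eq:rep}), $\cA_{l,\pm}^\Gamma$ is exactly the space of $\Gamma$-invariant homogeneous polynomials of degree $l$ whose $\gamma_2$-eigenvalue is $\pm(-1)^l$. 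Because $\Delta_Y$ enters the claimed identity only through the intersection with $\mathrm{Ker}(\Delta_Y)$, the whole lemma follows once I establish the graded identity
\[
\cA_{l,\pm}^\Gamma=f_{3,+}^{\varepsilon_1}f_{6,-}^{\varepsilon_2}\,\InvRing_m,\qquad l=m+3\varepsilon_1+6\varepsilon_2 .
\]

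For the inclusion $\supseteq$ I would use the explicit $y$-action from \eqref{eq:rep}: $\Gamma$ is generated by $\cF_Y(i)=\diag(1,-1,-1)$ and $\mathfrak{w}$, so it acts by even sign changes together with cyclic permutations of the $y_i$, while $\gamma_2$ acts by $(y_1,y_2,y_3)\mapsto(-y_2,-y_1,-y_3)$. A direct check then shows that every element of $\InvRing$ is $\Gamma$-invariant and $\gamma_2$-fixed, and that $f_{3,+}=-y_1y_2y_3$ and $f_{6,-}=(y_1^2-y_2^2)(y_1^2-y_3^2)(y_2^2-y_3^2)$ are $\Gamma$-invariant with $\gamma_2 f_{3,+}=-f_{3,+}$ and $\gamma_2 f_{6,-}=-f_{6,-}$. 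Hence $f_{3,+}^{\varepsilon_1}f_{6,-}^{\varepsilon_2}\InvRing_m$ consists of $\Gamma$-invariants of degree $l$ with $\gamma_2$-eigenvalue $(-1)^{\varepsilon_1+\varepsilon_2}$; the parity bookkeeping ($\varepsilon_1\equiv l$, and $\varepsilon_2=0$ for $+$, $\varepsilon_2=1$ for $-$) shows this eigenvalue equals $\pm(-1)^l$, so the inclusion into $\cA_{l,\pm}^\Gamma$ holds.

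The equality would then come from matching dimensions degree by degree. On one side, by the fundamental theorem of symmetric functions $\InvRing=\Q[s_1,s_2,s_3]$ with $s_k=e_k(y_1^2,y_2^2,y_3^2)$ of degrees $2,4,6$ is a free polynomial ring, so $\sum_m \dim\InvRing_m\,t^m=1/((1-t^2)(1-t^4)(1-t^6))$; as $\Q[y]$ is a domain, multiplication by $f_{3,+}^{\varepsilon_1}f_{6,-}^{\varepsilon_2}$ is injective and $\dim f_{3,+}^{\varepsilon_1}f_{6,-}^{\varepsilon_2}\InvRing_m=\dim\InvRing_m$. On the other side, the Fischer decomposition \eqref{eq:dsd2} gives $\sum_l\dim\cA_{l,\pm}^\Gamma\,t^l=(1-t^2)^{-1}\sum_l\dim\cH_{l,\pm}^\Gamma\,t^l$, which by \eqref{eq:dim2} equals $1/((1-t^2)(1-t^3)(1-t^4))$ for $+$ and $t^6$ times that for $-$. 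Using $(1+t^3)=(1-t^6)/(1-t^3)$ one checks these coincide term by term with $(1+t^3)$ (resp. $t^6(1+t^3)$) times the Hilbert series of $\InvRing$, i.e. with the series of $\bigoplus_{\varepsilon_1\in\{0,1\}}f_{3,+}^{\varepsilon_1}f_{6,-}^{\varepsilon_2}\InvRing$. Since only one value of $\varepsilon_1$ (the parity of $l$) contributes in any fixed degree, the inclusion $\supseteq$ combined with the coincidence of dimensions forces equality in every degree, which is the graded identity, and intersecting with $\mathrm{Ker}(\Delta_Y)$ finishes the lemma.

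The main obstacle is exactly this dimension comparison, where the genuinely non-formal content sits: a priori there could be further $\Gamma$-invariants not of the displayed product form, and it is the precise matching of the two generating functions — reflecting the signed-permutation reflection-group structure containing $\Gamma$ — that excludes them. An alternative to the counting step would be to combine the divisibility statements of Proposition \ref{prop:div} (any odd-degree form is divisible by $f_{3,+}$, any form in $\cH_{l,-}^\Gamma$ by $f_{6,-}$) with an argument that the resulting quotient is symmetric in $y_1^2,y_2^2,y_3^2$; I expect the Hilbert-series route above to be the cleaner of the two, since it stays entirely within the tools of \S\ref{sec:2}.
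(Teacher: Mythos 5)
Your proof is correct, but it takes a genuinely different route from the paper's. The paper's argument is essentially the alternative you sketch in your last paragraph: it first records that $\cH_{m,+}^\Gamma=\mathrm{Ker}(\Delta_Y)\cap\Q[y_1^2,y_2^2,y_3^2]^{\mathfrak{S}_3}_{m}$ for even $m$ (from Lemma \ref{lem:pm} and \eqref{eq:rep}), then deduces the hard inclusion $\cH^\Gamma_{m+3\varepsilon_1+6\varepsilon_2,\pm}\subseteq f_{3,+}^{\varepsilon_1}f_{6,-}^{\varepsilon_2}\times\Q[y_1^2,y_2^2,y_3^2]^{\mathfrak{S}_3}_{m}$ from the divisibility statements of Proposition \ref{prop:div}, the reverse inclusion being immediate. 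You instead prove the stronger graded identity $\cA_{l,\pm}^\Gamma=f_{3,+}^{\varepsilon_1}f_{6,-}^{\varepsilon_2}\times\Q[y_1^2,y_2^2,y_3^2]^{\mathfrak{S}_3}_{m}$ by combining the easy inclusion (a direct check of $\Gamma$-invariance and $\gamma_2$-eigenvalues, using that $\cA_{l,\pm}^\Gamma$ is the $\Gamma$-invariant part of $\cA_l$ on which $\gamma_2$ acts by $\pm(-1)^l$) with a Hilbert-series comparison based on \eqref{eq:dsd2} and \eqref{eq:dim2}: both generating functions equal $t^{6\varepsilon_2}/\bigl((1-t^2)(1-t^3)(1-t^4)\bigr)$, and since only one parity of $\varepsilon_1$ contributes in each degree, the inclusion forces equality; intersecting with $\mathrm{Ker}(\Delta_Y)$ then gives the lemma via Lemma \ref{lem:pm}. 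Your route has two advantages: it does not invoke Proposition \ref{prop:div} at all and in fact reproves it (divisibility by $f_{3,+}^{\varepsilon_1}f_{6,-}^{\varepsilon_2}$ is built into the product description of $\cA_{l,\pm}^\Gamma$), and it supplies the detail behind the paper's rather terse first step, namely why the even-degree plus-invariants are exactly the symmetric polynomials in $y_1^2,y_2^2,y_3^2$ (your dimension count excludes any further invariants). What the paper's route buys is brevity, since Proposition \ref{prop:div} is already available; but that proposition was itself proved by an induction resting on the same Fischer decomposition and the same dimension generating functions, so the two arguments use the same underlying ingredients, merely organized differently.
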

\begin{proof}
    It follows from Lemma \ref{lem:pm} and \eqref{eq:rep} that $\cH_{m,+}^\Gamma=\mathrm{Ker}( \Delta_Y) \cap \InvRing_m$ holds for any even integer $m>0$. 
    Hence, by Proposition \ref{prop:div}, $\mathcal{H}^\Gamma_{m+3\varepsilon_1+6 \varepsilon_2,\pm}$ is included in $f_{3,+}^{\varepsilon_1} f_{6,-}^{\varepsilon_2} \times \InvRing_{m}$. 
    On the other hand, it is obvious that any element in $\mathrm{Ker}( \Delta_Y) \cap (f_{3,+}^{\varepsilon_1} f_{6,-}^{\varepsilon_2} \times \InvRing_{m})$ belongs to $\mathcal{H}^\Gamma_{m+3\varepsilon_1+6 \varepsilon_2,\pm}$.  
\end{proof}

\newcommand{\el}{\mathfrak{e}}%Symbol for the elementary symmetric polynomials in section 7.

There are several choices of generators of the ring $\InvRing$.
The elementary symmetric polynomials
\[
\el_1(y)\coloneqq y_1^2+y_2^2+y_3^2,
\qquad
\el_2(y)\coloneqq y_1^2 y_2^2+y_1^2 y_3^2 + y_2^2 y_3^2,
\qquad
\el_3(y)\coloneqq y_1^2 y_2^2 y_3^2
\]
are one of the choice, so $\InvRing = \Q[\el_1,\el_2,\el_3]$.

Now we define the following second order linear partial differential operators
\begin{align*}
\Delta_{E} &\coloneqq 4 \el_1 \frac{\partial^2}{\partial \el_1^2}
+ 4(\el_1 \el_2+3 \el_3) \frac{\partial^2}{\partial \el_2^2}
+ 4 \el_2 \el_3 \frac{\partial^2}{\partial \el_3^2}
+ 16 \el_2 \frac{\partial^2}{\partial \el_1 \partial \el_2}
+ 24 \el_3 \frac{\partial^2}{\partial \el_1 \partial \el_3}
+ 16 \el_1 \el_3 \frac{\partial^2}{\partial \el_2 \partial \el_3},
\\
\Delta_{\varepsilon_1,\varepsilon_2} &\coloneqq \Delta_{E} 
+ (1+2\varepsilon_1) \left(6 \frac{\partial}{\partial \el_1} 
+ 4 \el_1 \frac{\partial}{\partial \el_2} 
+ 2 \el_2 \frac{\partial}{\partial \el_3}\right)
+ \varepsilon_2 \left(
  24 \frac{\partial}{\partial \el_1} 
+ 8 \el_1 \frac{\partial}{\partial \el_2} \right)
\end{align*}
for $\varepsilon_1$, $\varepsilon_2 \in \{ 0$, $1\}$.

\begin{lem}
These operators satisfy the following commutative diagram:
\[
\begin{array}{ccccc}
\Q[\el_1,\el_2,\el_3] &\overset\sim\longrightarrow &
f_{3,+}^{\varepsilon_1} f_{6,-}^{\varepsilon_2} \InvRing 
&\subset& \Q[y_1,y_2,y_3]
\\
\Delta_{\varepsilon_1,\varepsilon_2}\downarrow
& & \downarrow && \downarrow \Delta_Y 
\\
\Q[\el_1,\el_2,\el_3] &\overset\sim\longrightarrow &f_{3,+}^{\varepsilon_1} f_{6,-}^{\varepsilon_2} \InvRing 
&\subset& \Q[y_1,y_2,y_3]
\end{array}
\]
\end{lem}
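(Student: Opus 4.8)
The plan is to verify the stated commutation by a structured computation that reduces everything to the chain rule together with a handful of clean identities in the auxiliary variables $z_i\coloneqq y_i^2$. Throughout I write $u\coloneqq f_{3,+}^{\varepsilon_1}f_{6,-}^{\varepsilon_2}$ and $\partial_a\coloneqq \partial/\partial\el_a$. The key starting observation is that $u$ is harmonic in all four cases: it equals $1$, $f_{3,+}$, $f_{6,-}$, or $f_{3,+}f_{6,-}=f_{9,-}$, and each of these lies in $\mathrm{Ker}(\Delta_Y)$ because they are algebraic modular forms (for the last case I use the factorization $f_{9,-}=f_{3,+}f_{6,-}$ recorded in \S\ref{sec:fac}). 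Hence for $P\in\InvRing$ the Leibniz rule for the second-order operator $\Delta_Y$ collapses to
\[
\Delta_Y(uP)=u\,\Delta_Y P+2\sum_{i=1}^3 \frac{\partial u}{\partial y_i}\frac{\partial P}{\partial y_i},
\]
so it suffices to show the right-hand side is $u$ times a differential operator in $\el_1,\el_2,\el_3$ applied to $P$.

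Next I would expand both terms by the chain rule. Using $\partial\el_a/\partial y_i=2y_i\,(\partial\el_a/\partial z_i)$ and the fact that each $\el_a$ is multilinear in $(z_1,z_2,z_3)$, so that $\partial^2\el_a/\partial z_i^2=0$, one obtains
\[
\Delta_Y P=\sum_{a,b} g_{ab}\,\partial_a\partial_b P+\sum_a (\Delta_Y\el_a)\,\partial_a P,\qquad 2\sum_{i}\frac{\partial u}{\partial y_i}\frac{\partial P}{\partial y_i}=2\sum_a\Big(\sum_i \frac{\partial u}{\partial y_i}\frac{\partial\el_a}{\partial y_i}\Big)\partial_a P,
\]
where $g_{ab}=4\sum_i z_i\,(\partial\el_a/\partial z_i)(\partial\el_b/\partial z_i)$ and $\Delta_Y\el_a=2\sum_i \partial\el_a/\partial z_i$. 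A short computation (via Newton's identities for the three quantities $z_i$) identifies the six numbers $g_{ab}$ with the coefficients of $\Delta_E$, and gives $\Delta_Y\el_1=6$, $\Delta_Y\el_2=4\el_1$, $\Delta_Y\el_3=2\el_2$; these are exactly the first-order coefficients in the $\varepsilon_1=\varepsilon_2=0$ operator.

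It then remains to treat the logarithmic contribution $u^{-1}\sum_i(\partial u/\partial y_i)(\partial\el_a/\partial y_i)=\varepsilon_1 C_a^{(3)}+\varepsilon_2 C_a^{(6)}$, where $C_a^{(3)}$ and $C_a^{(6)}$ are the analogous expressions for $f_{3,+}$ and $f_{6,-}$ alone. For $f_{3,+}=-y_1y_2y_3$ one has $(\partial f_{3,+}/\partial y_i)/f_{3,+}=1/y_i$, whence $C_a^{(3)}=2\sum_i \partial\el_a/\partial z_i=\Delta_Y\el_a$; this precisely upgrades the first-order part by the factor $(1+2\varepsilon_1)$. For $f_{6,-}=\prod_{i<j}(z_i-z_j)$ one has $(\partial f_{6,-}/\partial y_i)/f_{6,-}=2y_i\sum_{j\ne i}(z_i-z_j)^{-1}$, which at first sight introduces poles along the diagonals $z_i=z_j$.

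The step needing care, and the main obstacle, is to show that these apparent poles cancel so that $C_a^{(6)}$ is a genuine polynomial in $\el_1,\el_2,\el_3$ matching the $\varepsilon_2$-term. I would resolve this with a divided-difference identity. Setting $\phi_1(t)=t$, $\phi_2(t)=\el_1 t-t^2$, $\phi_3(t)=\el_3$, so that $z_i\,\partial\el_a/\partial z_i=\phi_a(z_i)$, one writes
\[
C_a^{(6)}=4\sum_{i\ne j}\frac{\phi_a(z_i)}{z_i-z_j}=4\sum_{\{i,j\}}\frac{\phi_a(z_i)-\phi_a(z_j)}{z_i-z_j},
\]
and since each $\phi_a$ is polynomial in its single variable (constant for $a=3$), every difference quotient is a polynomial; evaluating the sums gives $C_1^{(6)}=12$, $C_2^{(6)}=4\el_1$, $C_3^{(6)}=0$. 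After multiplying by $2$ these are the coefficients $24$, $8\el_1$, $0$ of the $\varepsilon_2$-term. Assembling the three contributions reproduces exactly the operator $\Delta_{\varepsilon_1,\varepsilon_2}$; and because all of its coefficients are polynomials in $\el_1,\el_2,\el_3$, the resulting identity $\Delta_Y(uP)=u\,\Delta_{\varepsilon_1,\varepsilon_2}(P)$ simultaneously establishes the commutativity of the diagram and the stability of $f_{3,+}^{\varepsilon_1}f_{6,-}^{\varepsilon_2}\InvRing$ under $\Delta_Y$.
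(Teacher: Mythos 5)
Your proposal is correct, and since the paper's own proof is just the single sentence ``This can be proved by a direct calculation,'' you have in effect supplied that calculation in a well-organized form: the harmonicity of $f_{3,+}^{\varepsilon_1}f_{6,-}^{\varepsilon_2}$ reduces everything to a first-order correction of the chain-rule expansion, Newton's identities recover the coefficients of $\Delta_E$ and the $(1+2\varepsilon_1)$-factor, and the divided-difference identity correctly disposes of the apparent poles along $z_i=z_j$, yielding exactly the $\varepsilon_2\,(24\,\partial/\partial\el_1+8\el_1\,\partial/\partial\el_2)$ term. The only nitpick is the phrase identifying ``the six numbers $g_{ab}$'' with the coefficients of $\Delta_E$: the off-diagonal coefficients are $2g_{ab}$ because your sum runs over ordered pairs, which your displayed formula already accounts for, so this is purely a matter of wording.
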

\begin{proof}
This can be proved by a direct calculation. 
\end{proof}

Let
$\Q[\el_1,\el_2,\el_3]_m$ be the homogeneous part of degree $m$, where
the degrees of $\el_1$, $\el_2$, $\el_3$ are $2$, $4$, $6$ respectively.
We define
\begin{align*}
%\mathcal{H}^{\varepsilon_1,\varepsilon_2}_m
\eH{m}{\varepsilon_1,\varepsilon_2}
&\coloneqq \mathrm{Ker}( \Delta_{\varepsilon_1,\varepsilon_2}) \cap
\Q[\el_1,\el_2,\el_3]_m
\end{align*}
for non-negative even integer $m$.
We rephrase Lemma~\ref{lemma1} with these notations.
\begin{lem}
For non-negative even integer $m$,
\[
\mathcal{H}^\Gamma_{m+ 3 \varepsilon_1+6 \varepsilon_2,\pm} 
= f_{3,+}^{\varepsilon_1} f_{6,-}^{\varepsilon_2} \times 
\eH{m}{\varepsilon_1,\varepsilon_2}
%\mathcal{H}^{\varepsilon_1,\varepsilon_2}_m,
\]
where [$\varepsilon_1 = 0$ or $1$] 
and [$\varepsilon_2=0$ for plus, $\varepsilon_2=1$ for minus].
\end{lem}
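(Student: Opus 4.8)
The plan is to obtain the statement by transporting Lemma~\ref{lemma1} through the graded isomorphism supplied by the preceding commutative diagram. First I would make the top horizontal arrow of that diagram completely explicit: it is the composite of the canonical identification $\Q[\el_1,\el_2,\el_3]\xrightarrow{\sim}\InvRing$ (sending $\el_i$ to the $i$-th elementary symmetric polynomial in $y_1^2,y_2^2,y_3^2$) with multiplication by $f_{3,+}^{\varepsilon_1} f_{6,-}^{\varepsilon_2}$. Call this composite $\Phi$. Since $\Q[y_1,y_2,y_3]$ is an integral domain and $f_{3,+}^{\varepsilon_1} f_{6,-}^{\varepsilon_2}\neq 0$, the map $\Phi$ is injective, and its image is by definition $f_{3,+}^{\varepsilon_1} f_{6,-}^{\varepsilon_2}\InvRing$; hence $\Phi$ is exactly the isomorphism appearing in the diagram.

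Next I would check compatibility with the grading. Assign $\el_1,\el_2,\el_3$ the weights $2,4,6$, so that $\Phi$ carries $\Q[\el_1,\el_2,\el_3]_m$ onto $f_{3,+}^{\varepsilon_1} f_{6,-}^{\varepsilon_2}\InvRing_m$, which sits in degree $m+3\varepsilon_1+6\varepsilon_2$ of $\Q[y_1,y_2,y_3]$ because $\deg f_{3,+}=3$ and $\deg f_{6,-}=6$. A glance at each monomial of $\Delta_{\varepsilon_1,\varepsilon_2}$ shows it is homogeneous of weight $-2$ in this grading: for instance $\el_1\,\partial^2/\partial\el_1^2$ has weight $2-4=-2$, $\el_3\,\partial^2/\partial\el_2^2$ has weight $6-8=-2$, and $\el_1\,\partial/\partial\el_2$ has weight $2-4=-2$, and likewise for the remaining terms. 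Thus $\Delta_{\varepsilon_1,\varepsilon_2}$ lowers weight by $2$ and preserves the weight grading; the same holds for $\Delta_Y$. In particular $\mathrm{Ker}(\Delta_{\varepsilon_1,\varepsilon_2})$ is a graded subspace, so intersecting it with $\Q[\el_1,\el_2,\el_3]_m$ is meaningful and commutes with the passage to homogeneous components.

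Then I would combine the two ingredients. The commutative diagram gives the intertwining $\Delta_Y\circ\Phi=\Phi\circ\Delta_{\varepsilon_1,\varepsilon_2}$, so $\Phi$ maps $\mathrm{Ker}(\Delta_{\varepsilon_1,\varepsilon_2})$ isomorphically onto $\mathrm{Ker}(\Delta_Y)\cap\bigl(f_{3,+}^{\varepsilon_1} f_{6,-}^{\varepsilon_2}\InvRing\bigr)$. Restricting to weight $m$ and using the grading just verified yields
\[
\Phi\bigl(\eH{m}{\varepsilon_1,\varepsilon_2}\bigr)=\mathrm{Ker}(\Delta_Y)\cap\bigl(f_{3,+}^{\varepsilon_1} f_{6,-}^{\varepsilon_2}\times\InvRing_m\bigr),
\]
where by definition $\eH{m}{\varepsilon_1,\varepsilon_2}=\mathrm{Ker}(\Delta_{\varepsilon_1,\varepsilon_2})\cap\Q[\el_1,\el_2,\el_3]_m$. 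By Lemma~\ref{lemma1} the right-hand side equals $\cH^\Gamma_{m+3\varepsilon_1+6\varepsilon_2,\pm}$. Since, under the standing identification $\Q[\el_1,\el_2,\el_3]\cong\InvRing\subset\Q[y_1,y_2,y_3]$, the map $\Phi$ is literally multiplication by $f_{3,+}^{\varepsilon_1} f_{6,-}^{\varepsilon_2}$, this reads precisely $\cH^\Gamma_{m+3\varepsilon_1+6\varepsilon_2,\pm}=f_{3,+}^{\varepsilon_1} f_{6,-}^{\varepsilon_2}\times\eH{m}{\varepsilon_1,\varepsilon_2}$, as asserted.

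I do not expect a serious obstacle here, since all the substantive content has already been established: the intertwining of $\Delta_{\varepsilon_1,\varepsilon_2}$ with $\Delta_Y$ (proved by direct calculation in the commutative-diagram lemma) and the kernel description in Lemma~\ref{lemma1}. The only point demanding care is the bookkeeping of the weight grading, namely confirming that $\Phi$ is degree-preserving with weights $(2,4,6)$ and that $\Delta_{\varepsilon_1,\varepsilon_2}$ is homogeneous of weight $-2$, so that the two kernels may be matched homogeneous component by homogeneous component.
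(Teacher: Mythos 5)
Your proposal is correct and follows exactly the route the paper intends: the paper states this lemma without proof as a mere ``rephrasing'' of Lemma~\ref{lemma1} via the commutative diagram, and your argument --- transporting the kernel description through the graded isomorphism $\Phi$, after verifying that $\Delta_{\varepsilon_1,\varepsilon_2}$ is homogeneous of weight $-2$ for the weights $(2,4,6)$ --- is precisely that derivation, spelled out with the degree bookkeeping made explicit.
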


So, once we obtain a basis of 
$\eH{m}{\varepsilon_1,\varepsilon_2}$
%$\mathcal{H}^{\varepsilon_1,\varepsilon_2}_m$,
then 
multiplying $f_{3,+}$, $f_{6,-}$ if necessary,
and change the coordinates from $(\el_1,\el_2,\el_3)$ to $(y_1,y_2,y_3)$ and then to $(x_1,x_2,x_3)$,
we obtain the basis of $\mathcal{H}_{l,\pm}^\Gamma$.

\medskip

\noindent
{\bf Algorithm}:
Put $M_m\coloneqq
\{ \el_1^{j_1} \el_2^{j_2} \el_3^{j_3} \mid
0 \leq j_3 \leq m/6$, 
$0 \leq j_2 \leq (m-6j_3)/4$, $j_1 = (m-6j_3-4j_2)/2 \}$
the set of monomials with weighted degree $m$.
We see that $\Q[\el_1,\el_2,\el_3]_m$ is a vector space 
with basis $M_m$.
Consider a linear combination $\tilde{f}$ of elements in $M_m$.
Look at the coefficients of a polynomial 
$\Delta_{\varepsilon_1,\varepsilon_2} \tilde{f} \in \Q[\el_1,\el_2,\el_3]_{m-2}$
with basis $M_{m-2}$.
Solve the system of homogeneous linear equations
 so that all the coefficients above are zero. 
For each solution, we obtain an element in 
$\eH{m}{\varepsilon_1,\varepsilon_2}$,
%$\mathcal{H}^{\varepsilon_1,\varepsilon_2}_m$,
and then we obtain a basis of 
$\eH{m}{\varepsilon_1,\varepsilon_2}$.
%$\mathcal{H}^{\varepsilon_1,\varepsilon_2}_m$.
This algorithm works by the same argument as in \S\ref{sec:3} for $\mathrm{Ker}(\Delta_l)$.

\if0
Finally, let us mention congruence relations, which are reproved by new expressions. 
In \S\ref{sec:4}, we discussed congruences modulo $2$ of $f_{3,+}(y)=-y_1y_2y_3$ and $F_4(y)=3f_{4,+}(y)=\frac{1}{2}(-\el_1^2+5\el_2)$. 
The coordinate change \eqref{xy} gives the bijection between
\[
\{(x_1,x_2,x_3) \in \Z^3 \mid x_1+x_2+x_3\mbox{ is odd} \}
\mbox{ and }
\{(y_1,y_2,y_3) \in \Z^3 \mid y_1,\; y_2,\; y_3\mbox{ are odd } \}.
\]
Recall that we have $a_1+a_2+a_3\equiv 1\mod 2$ if $\Delta_E\equiv 5 \mod 8$ and $x_E=a_1\bb_1+a_2\bb_2+a_3\bb_3\in L(\cO)$ is a CM point of $E$. 
Set $(a_1',a_2',a_3')=(-a_1+a_2+a_3,a_1-a_2+a_3,a_1+a_2-a_3)$. 
Since $a_1'$, $a_2'$, $a_3'$ are odd by the above bijection, we get the congruence relation $f_{3,+}(a_1',a_2',a_3')\equiv a_1'a_2'a_3' \equiv 1 \mod 2$. 
One can prove that $(\el_1(y)-3)/8$ and $(\el_2(y)-2\el_1(y)+3)/64$ are integers if $y=(y_1,y_2,y_3)$ are triplet of odd integers. 
Hence, we also obtain $F_4(a_1',a_2',a_3')\equiv 1\mod 2$. 
\fi

\appendix
\section{Computational results for the variables \texorpdfstring{$(x_1,x_2,x_3)$}{TEXT}}\label{sec:app}

In what follows, we present numerical examples for bases of $\cH_{l,\pm}^\Gamma$ $(3\le l \le 12)$ by inputting the algorithm of \S\ref{sec:3} into \textsf{Magma} \cite{BCP97}. 
See \cite{OWY24} for data on higher degrees. 

\vspace{5mm}

\noindent
$l=3$ : $\dim \mathcal{H}_{3,+}^{\Gamma}=1$, $\dim \mathcal{H}_{3,-}^{\Gamma}=0$\\
$\mathcal{H}_{3,+}^{\Gamma} = \left< f_{3,+} \right>$

\begin{align}
\begin{autobreak}
f_{3,+} = 
x_1^3 - x_1^2x_2 - x_1^2x_3 - x_1x_2^2 + 2x_1x_2x_3 - x_1x_3^2 + x_2^3 - x_2^2x_3 -x_2x_3^2 + x_3^3 \nonumber
\end{autobreak}
\end{align}

\vspace{5mm}

\noindent
$l=4$ : $\dim \mathcal{H}_{4,+}^{\Gamma}=1$, $\dim \mathcal{H}_{4,-}^{\Gamma}=0$\\
$\mathcal{H}_{4,+}^{\Gamma} = \left< f_{4,+} \right>$

\begin{align}
\begin{autobreak}
f_{4,+} =
x_1^4 
- \frac{4}{3}x_1^3x_2 
- \frac{4}{3}x_1^3x_3 
- 2x_1^2x_2^2 
+ 4x_1^2x_2x_3 
- 2x_1^2x_3^2 
- \frac{4}{3}x_1x_2^3 
+ 4x_1x_2^2x_3 
+ 4x_1x_2x_3^2 
- \frac{4}{3}x_1x_3^3 
+ x_2^4 
- \frac{4}{3}x_2^3x_3 
- 2x_2^2x_3^2 
- \frac{4}{3}x_2x_3^3 
+ x_3^4 \nonumber
\end{autobreak}
\end{align}

\vspace{5mm}

\noindent
$l=5$ : $\dim \mathcal{H}_{5,+}^{\Gamma}=0$, $\dim \mathcal{H}_{5,-}^{\Gamma}=0$

\vspace{5mm}

\noindent
$l=6$ : $\dim \mathcal{H}_{6,+}^{\Gamma}=1$, $\dim \mathcal{H}_{6,-}^{\Gamma}=1$\\
$\mathcal{H}_{6,+}^{\Gamma} = \left< f_{6,+} \right>$, $\mathcal{H}_{6,-}^{\Gamma} = \left< f_{6,-} \right>$

\begin{align}
\begin{autobreak}
f_{6,+}=
3x_1^6
- 6x_1^5x_2 
- 6x_1^5x_3
- 15x_1^4x_2^2 
+ 30x_1^4x_2x_3 
- 15x_1^4x_3^2
+ 40x_1^3x_2^3 
- 30x_1^3x_2^2x_3 
- 30x_1^3x_2x_3^2 
+ 40x_1^3x_3^3 
-15x_1^2x_2^4 
- 30x_1^2x_2^3x_3 
+ 90x_1^2x_2^2x_3^2 
- 30x_1^2x_2x_3^3 
-15x_1^2x_3^4 
- 6x_1x_2^5 
+ 30x_1x_2^4x_3 
- 30x_1x_2^3x_3^2 
- 30x_1x_2^2x_3^3 
+ 30x_1x_2x_3^4 
- 6x_1x_3^5 
+ 3x_2^6 
- 6x_2^5x_3 
-15x_2^4x_3^2 
+ 40x_2^3x_3^3 
- 15x_2^2x_3^4 
- 6x_2x_3^5 
+ 3x_3^6 \nonumber 
\end{autobreak}
\end{align}
\begin{align}
\begin{autobreak}
f_{6,-}=
-x_1^3x_2^2x_3 
+ x_1^3x_2x_3^2 
+ x_1^2x_2^3x_3 
- x_1^2x_2x_3^3 
- x_1x_2^3x_3^2 
+x_1x_2^2x_3^3 \nonumber
\end{autobreak}
\end{align}

\vspace{5mm}

\noindent
$l=7$ : $\dim \mathcal{H}_{7,+}^{\Gamma}=1$, $\dim \mathcal{H}_{7,-}^{\Gamma}=0$\\
$\mathcal{H}_{7,+}^{\Gamma} = \left< f_{7,+} \right>$

\begin{align}
\begin{autobreak}
f_{7,+}=
x_1^7 
- \frac{7}{3}x_1^6x_2 
- \frac{7}{3}x_1^6x_3 
- 7x_1^5x_2^2 
+ 14x_1^5x_2x_3 
- 7x_1^5x_3^2
+ \frac{25}{3}x_1^4x_2^3 
+ 5x_1^4x_2^2x_3
 + 5x_1^4x_2x_3^2 
 + \frac{25}{3}x_1^4x_3^3 
+ \frac{25}{3}x_1^3x_2^4 
 - \frac{100}{3}x_1^3x_2^3x_3 
 + 10x_1^3x_2^2x_3^2 
- \frac{100}{3}x_1^3x_2x_3^3 
 + \frac{25}{3}x_1^3x_3^4 
 - 7x_1^2x_2^5 
 + 5x_1^2x_2^4x_3 
+ 10x_1^2x_2^3x_3^2 
 + 10x_1^2x_2^2x_3^3 
 + 5x_1^2x_2x_3^4 
 - 7x_1^2x_3^5 - 
 \frac{7}{3}x_1x_2^6 
 + 14x_1x_2^5x_3 
 + 5x_1x_2^4x_3^2 
 - \frac{100}{3}x_1x_2^3x_3^3 
+5x_1x_2^2x_3^4 
+ 14x_1x_2x_3^5
- \frac{7}{3}x_1x_3^6
 + x_2^7
  - \frac{7}{3}x_2^6x_3
-7x_2^5x_3^2 
+ \frac{25}{3}x_2^4x_3^3
 + \frac{25}{3}x_2^3x_3^4
  - 7x_2^2x_3^5 
-\frac{7}{3}x_2x_3^6 
+ x_3^7 \nonumber
\end{autobreak}
\end{align}

\vspace{5mm}

\noindent
$l=8$ : $\dim \mathcal{H}_{8,+}^{\Gamma}=1$, $\dim \mathcal{H}_{8,-}^{\Gamma}=0$\\
$\mathcal{H}_{8,+}^{\Gamma} = \left< f_{8,+} \right>$

\begin{align}
\begin{autobreak}
f_{8,+}=
3x_1^8
 - 8x_1^7x_2
 - 8x_1^7x_3
 - 28x_1^6x_2^2 
+ 56x_1^6x_2x_3
 - 28x_1^6x_3^2
 - 56x_1^5x_2^3 
 + 168x_1^5x_2^2x_3 
 + 168x_1^5x_2x_3^2 
 - 56x_1^5x_3^3 
+  210x_1^4x_2^4
   - 280x_1^4x_2^3x_3
    - 420x_1^4x_2^2x_3^2 
    - 280x_1^4x_2x_3^3
+ 210x_1^4x_3^4 
- 56x_1^3x_2^5 
- 280x_1^3x_2^4x_3 
+ 560x_1^3x_2^3x_3^2
+ 560x_1^3x_2^2x_3^3 
 - 280x_1^3x_2x_3^4 
 - 56x_1^3x_3^5 
 - 28x_1^2x_2^6 + 
 168x_1^2x_2^5x_3 
 - 420x_1^2x_2^4x_3^2 
 + 560x_1^2x_2^3x_3^3
- 420x_1^2x_2^2x_3^4 
 + 168x_1^2x_2x_3^5 
 - 28x_1^2x_3^6 
 - 8x_1x_2^7 + 
 56x_1x_2^6x_3 
 + 168x_1x_2^5x_3^2 
 - 280x_1x_2^4x_3^3 
 - 280x_1x_2^3x_3^4
+168x_1x_2^2x_3^5 
+ 56x_1x_2x_3^6 
- 8x_1x_3^7 
+ 3x_2^8 
- 8x_2^7x_3 
- 28x_2^6x_3^2 
 - 56x_2^5x_3^3 
 + 210x_2^4x_3^4 
 - 56x_2^3x_3^5 
- 28x_2^2x_3^6
  - 8x_2x_3^7 
  + 3x_3^8 \nonumber
\end{autobreak}
\end{align}

\vspace{5mm}

\noindent
$l=9$ : $\dim \mathcal{H}_{9,+}^{\Gamma}=1$, $\dim \mathcal{H}_{9,-}^{\Gamma}=1$\\
$\mathcal{H}_{9,+}^{\Gamma} = \left< f_{9,+} \right>$, $\mathcal{H}_{9,-}^{\Gamma} = \left< f_{9,-} \right>$

\begin{align}
\begin{autobreak}
f_{9,+}=
 x_1^9 
 - 3x_1^8x_2
 - 3x_1^8x_3 
 - 12x_1^7x_2^2 
 + 24x_1^7x_2x_3 
 - 12x_1^7x_3^2
+56x_1^6x_2^3
 - 42x_1^6x_2^2x_3 
 - 42x_1^6x_2x_3^2 
 + 56x_1^6x_3^3 
-42x_1^5x_2^4
 - 84x_1^5x_2^3x_3 
 + 252x_1^5x_2^2x_3^2 
 - 84x_1^5x_2x_3^3 
-42x_1^5x_3^4 
- 42x_1^4x_2^5 
+ 210x_1^4x_2^4x_3 
- 210x_1^4x_2^3x_3^2 
-210x_1^4x_2^2x_3^3 
+ 210x_1^4x_2x_3^4 
- 42x_1^4x_3^5 
+ 56x_1^3x_2^6 
-84x_1^3x_2^5x_3 
- 210x_1^3x_2^4x_3^2 
+ 560x_1^3x_2^3x_3^3 
-210x_1^3x_2^2x_3^4 
- 84x_1^3x_2x_3^5 
+ 56x_1^3x_3^6 
- 12x_1^2x_2^7
-42x_1^2x_2^6x_3 
+ 252x_1^2x_2^5x_3^2 
- 210x_1^2x_2^4x_3^3 
-210x_1^2x_2^3x_3^4 
+ 252x_1^2x_2^2x_3^5
 - 42x_1^2x_2x_3^6
 - 12x_1^2x_3^7
- 3x_1x_2^8 
+ 24x_1x_2^7x_3
 - 42x_1x_2^6x_3^2
  - 84x_1x_2^5x_3^3 
+ 210x_1x_2^4x_3^4 
 - 84x_1x_2^3x_3^5 
 - 42x_1x_2^2x_3^6 
 + 24x_1x_2x_3^7 
-3x_1x_3^8 
+ x_2^9 
- 3x_2^8x_3
 - 12x_2^7x_3^2
  + 56x_2^6x_3^3
-42x_2^5x_3^4
 - 42x_2^4x_3^5 
 + 56x_2^3x_3^6
  - 12x_2^2x_3^7
   - 3x_2x_3^8 
+ x_3^9 \nonumber
\end{autobreak}
\end{align}

\begin{align}
\begin{autobreak}
f_{9,-}=
-x_1^6x_2^2x_3 
+ x_1^6x_2x_3^2 
+ 2x_1^5x_2^3x_3 
- 2x_1^5x_2x_3^3 
- 5x_1^4x_2^3x_3^2 
 + 5x_1^4x_2^2x_3^3 
 - 2x_1^3x_2^5x_3 
 + 5x_1^3x_2^4x_3^2 
- 5x_1^3x_2^2x_3^4 
+ 2x_1^3x_2x_3^5 
+ x_1^2x_2^6x_3 
- 5x_1^2x_2^4x_3^3
+ 5x_1^2x_2^3x_3^4 
 - x_1^2x_2x_3^6 
 - x_1x_2^6x_3^2 
 + 2x_1x_2^5x_3^3
-2x_1x_2^3x_3^5 
+ x_1x_2^2x_3^6 \nonumber
\end{autobreak}
\end{align}

\vspace{5mm}

\noindent
$l=10$ : $\dim \mathcal{H}_{10,+}^{\Gamma}=1$, $\dim \mathcal{H}_{10,-}^{\Gamma}=1$\\
$\mathcal{H}_{10,+}^{\Gamma} = \left< f_{10,+} \right>$, $\mathcal{H}_{10,-}^{\Gamma} = \left< f_{10,-} \right>$

\begin{align}
\begin{autobreak}
f_{10,+}=
x_1^{10} 
- \frac{10}{3}x_1^9x_2
 - \frac{10}{3}x_1^9x_3 
 - 15x_1^8x_2^2 
 + 30x_1^8x_2x_3
-15x_1^8x_3^2 
+ 44x_1^7x_2^3
 - 6x_1^7x_2^2x_3
  - 6x_1^7x_2x_3^2 
+44x_1^7x_3^3 
+ 14x_1^6x_2^4
 - 182x_1^6x_2^3x_3 
 + 168x_1^6x_2^2x_3^2 
-182x_1^6x_2x_3^3 
+ 14x_1^6x_3^4 
- 84x_1^5x_2^5 
+ 168x_1^5x_2^4x_3
-42x_1^5x_2^3x_3^2 
- 42x_1^5x_2^2x_3^3 
+ 168x_1^5x_2x_3^4 
- 84x_1^5x_3^5 
+ 14x_1^4x_2^6 
+ 168x_1^4x_2^5x_3 
- 210x_1^4x_2^4x_3^2
+140x_1^4x_2^3x_3^3
 - 210x_1^4x_2^2x_3^4 
 + 168x_1^4x_2x_3^5
+14x_1^4x_3^6 
+ 44x_1^3x_2^7 
- 182x_1^3x_2^6x_3
- 42x_1^3x_2^5x_3^2 
+140x_1^3x_2^4x_3^3 
+ 140x_1^3x_2^3x_3^4 
- 42x_1^3x_2^2x_3^5
-182x_1^3x_2x_3^6 
+ 44x_1^3x_3^7 
- 15x_1^2x_2^8 
- 6x_1^2x_2^7x_3
+168x_1^2x_2^6x_3^2
 - 42x_1^2x_2^5x_3^3
  - 210x_1^2x_2^4x_3^4
-42x_1^2x_2^3x_3^5 
+ 168x_1^2x_2^2x_3^6 
- 6x_1^2x_2x_3^7
 - 15x_1^2x_3^8
- \frac{10}{3}x_1x_2^9 
 + 30x_1x_2^8x_3 
 - 6x_1x_2^7x_3^2 
 - 182x_1x_2^6x_3^3 
+168x_1x_2^5x_3^4 
+ 168x_1x_2^4x_3^5 
- 182x_1x_2^3x_3^6 
- 6x_1x_2^2x_3^7 
+ 30x_1x_2x_3^8 
- \frac{10}{3}x_1x_3^9 
+ x_2^{10} 
- \frac{10}{3}x_2^9x_3 
- 15x_2^8x_3^2
+44x_2^7x_3^3 
+ 14x_2^6x_3^4
 - 84x_2^5x_3^5
  + 14x_2^4x_3^6 
  + 44x_2^3x_3^7
- 15x_2^2x_3^8
 - \frac{10}{3}x_2x_3^9 
 + x_3^{10} \nonumber
\end{autobreak}
\end{align}

\begin{align}
\begin{autobreak}
f_{10,-}=
-x_1^7x_2^2x_3 
+ x_1^7x_2x_3^2 
+ \frac{7}{3}x_1^6x_2^3x_3 
- \frac{7}{3}x_1^6x_2x_3^3 
- 7x_1^5x_2^3x_3^2 
 + 7x_1^5x_2^2x_3^3 
 - \frac{7}{3}x_1^3x_2^6x_3
+7x_1^3x_2^5x_3^2 
- 7x_1^3x_2^2x_3^5
+ \frac{7}{3}x_1^3x_2x_3^6 
+ x_1^2x_2^7x_3 
- 7x_1^2x_2^5x_3^3 
 + 7x_1^2x_2^3x_3^5
  - x_1^2x_2x_3^7
   - x_1x_2^7x_3^2
+ \frac{7}{3}x_1x_2^6x_3^3 
 - \frac{7}{3}x_1x_2^3x_3^6 
 + x_1x_2^2x_3^7 \nonumber
\end{autobreak}
\end{align}

\vspace{5mm}

\noindent
$l=11$ : $\dim \mathcal{H}_{11,+}^{\Gamma}=1$, $\dim \mathcal{H}_{11,-}^{\Gamma}=0$\\
$\mathcal{H}_{11,+}^{\Gamma} = \left< f_{11,+} \right>$

\begin{align}
\begin{autobreak}
f_{11,+}=
3x_1^{11} 
- 11x_1^{10}x_2
 - 11x_1^{10}x_3 
 - 55x_1^9x_2^2 
 + 110x_1^9x_2x_3 
- 55x_1^9x_3^2 
 + 15x_1^8x_2^3 
 + 225x_1^8x_2^2x_3 
 + 225x_1^8x_2x_3^2 
+15x_1^8x_3^3 
+ 510x_1^7x_2^4 
- 1080x_1^7x_2^3x_3 
- 540x_1^7x_2^2x_3^2 
-1080x_1^7x_2x_3^3 
+ 510x_1^7x_3^4 
- 462x_1^6x_2^5 
- 630x_1^6x_2^4x_3 
+2100x_1^6x_2^3x_3^2 
+ 2100x_1^6x_2^2x_3^3 
- 630x_1^6x_2x_3^4 
-462x_1^6x_3^5
 - 462x_1^5x_2^6 
 + 2772x_1^5x_2^5x_3
  - 1890x_1^5x_2^4x_3^2 
 - 840x_1^5x_2^3x_3^3
  - 1890x_1^5x_2^2x_3^4 
  + 2772x_1^5x_2x_3^5 
-462x_1^5x_3^6 
+ 510x_1^4x_2^7 
- 630x_1^4x_2^6x_3 
- 1890x_1^4x_2^5x_3^2
+1050x_1^4x_2^4x_3^3 
+ 1050x_1^4x_2^3x_3^4 
- 1890x_1^4x_2^2x_3^5 
-630x_1^4x_2x_3^6 
+ 510x_1^4x_3^7 
+ 15x_1^3x_2^8 
- 1080x_1^3x_2^7x_3 
+2100x_1^3x_2^6x_3^2
 - 840x_1^3x_2^5x_3^3 
 + 1050x_1^3x_2^4x_3^4 
-840x_1^3x_2^3x_3^5 
+ 2100x_1^3x_2^2x_3^6 
- 1080x_1^3x_2x_3^7 
+15x_1^3x_3^8 
- 55x_1^2x_2^9 
+ 225x_1^2x_2^8x_3 
- 540x_1^2x_2^7x_3^2 
+2100x_1^2x_2^6x_3^3
 - 1890x_1^2x_2^5x_3^4 
 - 1890x_1^2x_2^4x_3^5 
+2100x_1^2x_2^3x_3^6 
- 540x_1^2x_2^2x_3^7 
+ 225x_1^2x_2x_3^8 
-55x_1^2x_3^9 
- 11x_1x_2^{10} 
+ 110x_1x_2^9x_3 
+ 225x_1x_2^8x_3^2 
-1080x_1x_2^7x_3^3
 - 630x_1x_2^6x_3^4
  + 2772x_1x_2^5x_3^5 
- 630x_1x_2^4x_3^6
  - 1080x_1x_2^3x_3^7 
  + 225x_1x_2^2x_3^8 
  + 110x_1x_2x_3^9
 - 11x_1x_3^{10}
  + 3x_2^{11} 
  - 11x_2^{10}x_3
   - 55x_2^9x_3^2 
   + 15x_2^8x_3^3 
+  510x_2^7x_3^4
   - 462x_2^6x_3^5
    - 462x_2^5x_3^6 
    + 510x_2^4x_3^7 
+  15x_2^3x_3^8 
  - 55x_2^2x_3^9 
  - 11x_2x_3^{10} 
  + 3x_3^{11} \nonumber
\end{autobreak}
\end{align}

\vspace{5mm}

\noindent
$l=12$ : $\dim \mathcal{H}_{12,+}^{\Gamma}=2$, $\dim \mathcal{H}_{12,-}^{\Gamma}=1$\\
$\mathcal{H}_{12,+}^{\Gamma} = \left< f_{12,+}^{(1)}, f_{12,+}^{(2)} \right>$, $\mathcal{H}_{12,-}^{\Gamma} = \left< f_{12,-} \right>$

\begin{align}
\begin{autobreak}
f_{12,+}^{(1)}=
x_1^{12} 
- 4x_1^{11}x_2 
- 4x_1^{11}x_3 
- 22x_1^{10}x_2^2 
+ 44x_1^{10}x_2x_3
-22x_1^{10}x_3^2 
+ 110x_1^9x_2^2x_3 
+ 110x_1^9x_2x_3^2
 + 275x_1^8x_2^4 
-550x_1^8x_2^3x_3 
- 330x_1^8x_2^2x_3^2
 - 550x_1^8x_2x_3^3 
 + 275x_1^8x_3^4
- 264x_1^7x_2^5 
- 440x_1^7x_2^4x_3
 + 1320x_1^7x_2^3x_3^2
  + 1320x_1^7x_2^2x_3^3
   - 440x_1^7x_2x_3^4 
   - 264x_1^7x_3^5 
+ 924x_1^6x_2^5x_3 
- 3850x_1^6x_2^4x_3^2 
+ 6160x_1^6x_2^3x_3^3
-3850x_1^6x_2^2x_3^4 
+ 924x_1^6x_2x_3^5
 - 264x_1^5x_2^7
+ 924x_1^5x_2^6x_3 
+ 5544x_1^5x_2^5x_3^2
 - 7700x_1^5x_2^4x_3^3 
- 7700x_1^5x_2^3x_3^4 
+ 5544x_1^5x_2^2x_3^5
 + 924x_1^5x_2x_3^6
-264x_1^5x_3^7
 + 275x_1^4x_2^8
  - 440x_1^4x_2^7x_3
   - 3850x_1^4x_2^6x_3^2
  -7700x_1^4x_2^5x_3^3 
  + 26950x_1^4x_2^4x_3^4 
  - 7700x_1^4x_2^3x_3^5 
- 3850x_1^4x_2^2x_3^6
 - 440x_1^4x_2x_3^7 
 + 275x_1^4x_3^8 
- 550x_1^3x_2^8x_3 
+ 1320x_1^3x_2^7x_3^2 
+ 6160x_1^3x_2^6x_3^3 
-7700x_1^3x_2^5x_3^4
 - 7700x_1^3x_2^4x_3^5
  + 6160x_1^3x_2^3x_3^6 
+ 1320x_1^3x_2^2x_3^7 
- 550x_1^3x_2x_3^8
 - 22x_1^2x_2^{10}
+ 110x_1^2x_2^9x_3 
- 330x_1^2x_2^8x_3^2 
+ 1320x_1^2x_2^7x_3^3 
- 3850x_1^2x_2^6x_3^4 
+ 5544x_1^2x_2^5x_3^5 
- 3850x_1^2x_2^4x_3^6 
+1320x_1^2x_2^3x_3^7
 - 330x_1^2x_2^2x_3^8 
 + 110x_1^2x_2x_3^9 
-22x_1^2x_3^{10} 
- 4x_1x_2^{11} 
+ 44x_1x_2^{10}x_3
 + 110x_1x_2^9x_3^2  
- 550x_1x_2^8x_3^3 
- 440x_1x_2^7x_3^4 
+ 924x_1x_2^6x_3^5 
+924x_1x_2^5x_3^6
 - 440x_1x_2^4x_3^7
  - 550x_1x_2^3x_3^8 
+110x_1x_2^2x_3^9 
+ 44x_1x_2x_3^{10} 
- 4x_1x_3^{11} + x_2^{12}
 - 4x_2^{11}x_3 
- 22x_2^{10}x_3^2
 + 275x_2^8x_3^4
  - 264x_2^7x_3^5 
  - 264x_2^5x_3^7  
 +275x_2^4x_3^8 
 - 22x_2^2x_3^{10}
  - 4x_2x_3^{11} 
  + x_3^{12} \nonumber
\end{autobreak}
\end{align}

\begin{align}
\begin{autobreak}
f_{12,+}^{(2)}=
10x_1^9x_2^3 
- 15x_1^9x_2^2x_3
 - 15x_1^9x_2x_3^2 
 + 10x_1^9x_3^3 
- 30x_1^8x_2^4 
 + 15x_1^8x_2^3x_3 
 + 90x_1^8x_2^2x_3^2 
 + 15x_1^8x_2x_3^3 
-30x_1^8x_3^4
 + 120x_1^7x_2^4x_3 
 - 180x_1^7x_2^3x_3^2 
-180x_1^7x_2^2x_3^3 
+ 120x_1^7x_2x_3^4 
+ 42x_1^6x_2^6
 - 126x_1^6x_2^5x_3 
+ 105x_1^6x_2^4x_3^2 
+ 105x_1^6x_2^2x_3^4 
- 126x_1^6x_2x_3^5 
+42x_1^6x_3^6 
- 126x_1^5x_2^6x_3 
+ 210x_1^5x_2^4x_3^3 
+ 210x_1^5x_2^3x_3^4 
 - 126x_1^5x_2x_3^6 
 - 30x_1^4x_2^8
  + 120x_1^4x_2^7x_3 
+ 105x_1^4x_2^6x_3^2 
+ 210x_1^4x_2^5x_3^3
 - 1050x_1^4x_2^4x_3^4 
+210x_1^4x_2^3x_3^5 
+ 105x_1^4x_2^2x_3^6 
+ 120x_1^4x_2x_3^7 
-30x_1^4x_3^8 
+ 10x_1^3x_2^9 
+ 15x_1^3x_2^8x_3
 - 180x_1^3x_2^7x_3^2 
+210x_1^3x_2^5x_3^4 
+ 210x_1^3x_2^4x_3^5
 - 180x_1^3x_2^2x_3^7 
+15x_1^3x_2x_3^8 
+ 10x_1^3x_3^9
 - 15x_1^2x_2^9x_3
  + 90x_1^2x_2^8x_3^2 
-180x_1^2x_2^7x_3^3 
+ 105x_1^2x_2^6x_3^4 
+ 105x_1^2x_2^4x_3^6 
-180x_1^2x_2^3x_3^7 
+ 90x_1^2x_2^2x_3^8 
- 15x_1^2x_2x_3^9 
-15x_1x_2^9x_3^2 
+ 15x_1x_2^8x_3^3 
+ 120x_1x_2^7x_3^4 
- 126x_1x_2^6x_3^5 
 - 126x_1x_2^5x_3^6 
 + 120x_1x_2^4x_3^7
  + 15x_1x_2^3x_3^8 
-15x_1x_2^2x_3^9 
+ 10x_2^9x_3^3
 - 30x_2^8x_3^4
  + 42x_2^6x_3^6 
-30x_2^4x_3^8 
+ 10x_2^3x_3^9 \nonumber
\end{autobreak}
\end{align}

\begin{align}
\begin{autobreak}
f_{12,-}=
-5x_1^9x_2^2x_3 
+ 5x_1^9x_2x_3^2 
+ 15x_1^8x_2^3x_3 
- 15x_1^8x_2x_3^3 
-60x_1^7x_2^3x_3^2
 + 60x_1^7x_2^2x_3^3
  - 42x_1^6x_2^5x_3 
+105x_1^6x_2^4x_3^2
 - 105x_1^6x_2^2x_3^4 
 + 42x_1^6x_2x_3^5 
+42x_1^5x_2^6x_3
 - 210x_1^5x_2^4x_3^3 
 + 210x_1^5x_2^3x_3^4 
- 42x_1^5x_2x_3^6 
 - 105x_1^4x_2^6x_3^2 
 + 210x_1^4x_2^5x_3^3 
-210x_1^4x_2^3x_3^5 
+ 105x_1^4x_2^2x_3^6 
- 15x_1^3x_2^8x_3 
+60x_1^3x_2^7x_3^2
 - 210x_1^3x_2^5x_3^4
  + 210x_1^3x_2^4x_3^5 
- 60x_1^3x_2^2x_3^7 
 + 15x_1^3x_2x_3^8 
 + 5x_1^2x_2^9x_3 
 - 60x_1^2x_2^7x_3^3
 + 105x_1^2x_2^6x_3^4 
 - 105x_1^2x_2^4x_3^6 
 + 60x_1^2x_2^3x_3^7 
-5x_1^2x_2x_3^9 
- 5x_1x_2^9x_3^2 
+ 15x_1x_2^8x_3^3 
- 42x_1x_2^6x_3^5 
+42x_1x_2^5x_3^6 
- 15x_1x_2^3x_3^8 
+ 5x_1x_2^2x_3^9 \nonumber
\end{autobreak}
\end{align}

%%%%%%%%%%%%%%%%%%%%%%%%%%%%%%%%%%%%%%%%%%%%%%%%%%%%%%%%

\section{Computational results for the variables \texorpdfstring{$(\el_1,\el_2,\el_3)$}{TEXT}}\label{sec:app2}

In this appendix, we list the algebraic modular forms in $\el$ coordinates explained in \S\ref{sec:refine} for low degrees $l \leq 12$.
%We here put $\eH{m}{\varepsilon_1, \varepsilon_2}:=\eHold{m}{\varepsilon_1, \varepsilon_2}$.
We also give a comparison of the expression given in Appendix \ref{sec:app} in $x$-coordinates and that given here.
We have seen 
$\dim \eH{m}{\varepsilon_1, \varepsilon_2} 
= k+1$
if $m-12k \in \{0,4,6,8,10,14\}$,
and $\dim \eH{m}{\varepsilon_1, \varepsilon_2} =0$ otherwise.
If $\dim \eH{m}{\varepsilon_1, \varepsilon_2} =1$, then
we will choose $\modF{m}{\varepsilon_1, \varepsilon_2}  \in \eH{m}{\varepsilon_1, \varepsilon_2}$ so that the coefficient of $\el_1^{m/2}$
is one.
Then we determine the explicit constant multiple
between $f_{l,\pm}$ and $\modF{m}{\varepsilon_1, \varepsilon_2}$.

On the other hand,
when the dimension of $\eH{m}{\varepsilon_1, \varepsilon_2}$
is greater than $1$, then we choose a basis 
$\modF{m}{\varepsilon_1, \varepsilon_2}^{(1)}$, 
$\ldots$,
$\modF{m}{\varepsilon_1, \varepsilon_2}^{(d)}$ 
of this space $\eH{m}{\varepsilon_1, \varepsilon_2}$
according to graded reverse lexicographic order, say,  grevlex, 
of monomials in $\el_1,\el_2,\el_3$.
In the case $m=12$,
we have seen $\dim \eH{12}{\varepsilon_1, \varepsilon_2} =2$,
and we will see that 
the basis $f_{12,+}^{(1)}, f_{12,+}^{(2)}$ of $\HG{12,+}$ given in
Appendix~\ref{sec:app} is a linear combination of the basis
$\modF{12}{\varepsilon_1, \varepsilon_2}^{(1)}$,
$\modF{12}{\varepsilon_1, \varepsilon_2}^{(2)}$ of
$\eH{12}{0,0}$ of this section.
We will determine the explicit linear combination below.

See \cite{OWY24} for
an explicit data of basis polynomials 
$\modF{m}{\varepsilon_1, \varepsilon_2}^{(1)}, \ldots,
\modF{m}{\varepsilon_1, \varepsilon_2}^{(d)}$ of
$\eH{m}{\varepsilon_1, \varepsilon_2}$
for higher degrees up to $m\leq 100$.

We recall
\[
f_{3,+} = -y_1y_2y_3,
\qquad
f_{6,-} = \frac{1}{64} (y_1^2-y_2^2)(y_1^2-y_3^2)(y_2^2-y_3^2).
\]

\subsection{The case $\dim \eH{m}{\varepsilon_1, \varepsilon_2}=1$}

\begin{itemize}
\item %---------------------------
$m=0$:
$\eH{0}{0,0}=\eH{0}{1,0}=\eH{0}{0,1}=\eH{0}{1,1}=
\left< 1 \right>$.

$l=0$ :
$\HG{0,+}=\eH{0}{0,0} 
= \left< 1  \right>$.

$l=3$ : 
$\HG{3,+}=f_{3,+}\times\eH{0}{1,0} 
= f_{3,+}\times\left< 1 \right>
= \left< f_{3,+}  \right>$.

$l=6$ : 
$\HG{6,-}=f_{6,-}\times\eH{0}{0,1} 
=f_{6,-}\times \left< 1 \right>
=\left< f_{6,-} \right> $.

$l=9$ :
$\HG{9,-}= f_{3,+} f_{6,-} \times\eH{0}{1,1}
=f_{3,+} f_{6,-}\times \left< 1 \right>
=\left< f_{3,+} f_{6,-} \right>$.

\[
f_{9,-}=f_{3,+}f_{6,-}.
\]

\item %--------------------------
$m=4$:
$\eH{4}{\varepsilon_1, \varepsilon_2}
=\left<\modF{4}{\varepsilon_1, \varepsilon_2}\right>$.

$l=4$ : 
$\HG{4,+}=\eH{4}{0,0} =\left< \modF{4}{0,0} \right>$,
\[
\modF{4}{0,0}=\el_1^2-5\el_2,
\qquad
f_{4,+}=-\frac{1}{6} \modF{4}{0,0}.
\]

$l=7$ : 
$\HG{7,+}=f_{3,+}\times \eH{4}{1,0} 
= f_{3,+}\times \left< \modF{4}{1,0} \right>
= \left< f_{3,+} \modF{4}{1,0} \right>$,
\[
\modF{4}{1,0}=\el_1^2-\frac{11}{3}\el_2,
\qquad
f_{7,+}=-\frac{1}{2} f_{3,+} \modF{4}{1,0}.
\]

$l=10$ : 
$\HG{10,-}=f_{6,-}\times \eH{4}{0,1} 
= f_{6,-}\times\left< \modF{4}{0,1} \right>
= \left<f_{6,-}\modF{4}{0,1} \right>$,
\[
\modF{4}{0,1}=\el_1^2-\frac{17}{3}\el_2,
\qquad
f_{10,-}=-\frac{1}{8}f_{6,-}  \modF{4}{0,1}.
\]

$l=13$ : 
$\HG{13,-}= f_{3,+} f_{6,-}\times \eH{4}{1,1} 
= f_{3,+} f_{6,-}\times\left< \modF{4}{1,1} \right>
= \left< f_{3,+}f_{6,-}\modF{4}{1,1} \right>$,
\[
\modF{4}{1,1}=\el_1^2-\frac{23}{5}\el_2,
\qquad
f_{13,-}=-\frac{5}{24}f_{3,+} f_{6,-}  \modF{4}{1,1}.
\]

\item %--------------------------
$m=6$.

$l=6$ : 
$\HG{6,+}=\eH{6}{0,0} = \left< \modF{6}{0,0} \right>$,
\[
\modF{6}{0,0}=\el_1^3-\frac{21}{2} \el_1\el_2+\frac{231}{2}\el_3,
\qquad
f_{6,+}=\frac{1}{16} \modF{6}{0,0}.
\]

$l=9$ : 
$\HG{9,+}=f_{3,+}\times\eH{6}{1,0}
=f_{3,+} \times \left<  \modF{6}{1,0} \right>
=\left<  f_{3,+} \modF{6}{1,0} \right>$,
\[
\modF{6}{1,0}=\el_1^3-\frac{13}{2}\el_1\el_2+\frac{221}{6}\el_3,
\qquad
f_{9,+}=\frac{3}{16}f_{3,+}\modF{6}{1,0}.
\]

$l=12$ : 
$\HG{12,-} =f_{6,-}\times \eH{6}{0,1}
= f_{6,-}\times\left< \modF{6}{0,1} \right>
= \left< f_{6,-} \modF{6}{0,1} \right>$,
\[
\modF{6}{0,1}=\el_1^3-\frac{19}{2}\el_1\el_2+\frac{437}{2}\el_3,
\qquad 
f_{12,-}=\frac{1}{32}f_{6,-} \modF{6}{0,1}.
\]

$l=15$ : 
$\HG{15,-}= f_{3,+} f_{6,-}\times \eH{6}{1,1} 
= f_{3,+} f_{6,-}\times\left< \modF{6}{1,1} \right>
= \left< f_{3,+}f_{6,-}\modF{6}{1,1} \right>$,
\[
\modF{6}{1,1}=\el_1^3-\frac{15}{2}\el_1\el_2+\frac{145}{2}\el_3,
\qquad 
f_{12,-}=\frac{1}{32}f_{6,-} \modF{6}{1,1}.
\]

\item %--------------------------
$m=8$.

$l=8$ : 
$\HG{8,+}=\eH{8}{0,0}= \left< \modF{8}{0,0} \right>$,
\[
\modF{8}{0,0}=\el_1^4-18\el_1^2\el_2+65\el_2^2-52\el_1\el_3,
\qquad 
f_{8,+}=\frac{1}{8} \modF{8}{0,0}.
\]

$l=11$ :
$\HG{11,+}=f_{3,+}\times \eH{8}{1,0} 
=f_{3,+}\times  \left< \modF{8}{1,0} \right>
= \left< f_{3,+} \modF{8}{1,0} \right>$,
\[
\modF{8}{1,0} =\el_1^4-10\el_1^2\el_2+\frac{133}{5}\el_2^2-\frac{76}{5}\el_1\el_3,
\qquad
f_{11,+}=\frac{5}{8}f_{3,+} \modF{8}{1,0}.
\]

$l=14$ :
$f_{14,-}= \frac{1}{48} f_{6,-} \modF{8}{0,1}$

$l=17$ : 
$f_{17,-} = \frac{35}{16} f_{3,+} f_{6,-} \modF{8}{1,1}$.

\item %--------------------------
$m=10$.

$l=10$ : 
$\HG{10,+}=\eH{10}{0,0} = \left< \modF{10}{0,0} \right>$, 
\[
\modF{10}{0,0}=\el_1^5-\frac{55}{2}\el_1^3\el_2+\frac{187}{2}\el_1\el_2^2+\frac{979}{2}\el_1^2\el_3-\frac{3553}{2}\el_2\el_3,
\quad
f_{10,+}=-\frac{1}{384} \modF{10}{0,0}.
\]

$l=13$ :
$f_{13,+}=-\frac{15}{128}f_{3,+} \modF{10}{1,0}$.

$l=16$ :
$f_{16,-}=-\frac{15}{64}f_{6,-} \modF{10}{0,1}$.

$l=19$ : 
$f_{19,-}=-\frac{1}{64}f_{3,+} f_{6,-} \modF{10}{1,1}$.

\item 
$m=14$. omitted.
\end{itemize}

\subsection{The case $\dim \eH{m}{\varepsilon_1, \varepsilon_2}=2$}
\begin{itemize}
\item $m=12$.

$l=12$ : 
$\HG{12,+}  = \left< f_{12,+}^{(1)}, f_{12,+}^{(2)} \right>
= \eH{12}{0,0} 
= \left< \modF{12}{0,0}^{(1)}, \modF{12}{0,0}^{(2)}\right>$,
\[
\modF{12}{0,0}^{(1)}=
\el_1^6-39 \el_1^4 \el_2+\frac{4199}{5} \el_2^3+2652 \el_1^3 \el_3-\frac{75582}{5} \el_1 \el_2 \el_3+\frac{264537}{5} \el_3^2,
\]
\[
\modF{12}{0,0}^{(2)}=
\el_1^2 \el_2^2-\frac{24}{5} \el_2^3-8 \el_1^3 \el_3+\frac{222}{5} \el_1 \el_2 \el_3-\frac{707}{5} \el_3^2.
\]
\[
f_{12,+}^{(1)}=-\frac{7}{1024} \modF{12}{0,0}^{(1)} - \frac{5549}{2048} \modF{12}{0,0}^{(2)},
\qquad
f_{12,+}^{(2)}=\frac{1}{2048} \modF{12}{0,0}^{(1)} - \frac{747}{4096} \modF{12}{0,0}^{(2)}.
\]
%\[
%f_{12,+}^{(1)}= \frac{1}{2048}( -14\el_1^6 + 546 \el_1^4 \el_2 -5549 \el_1^2 \el_2^2 + 14878\el_2^3  + 7264 \el_1^3 \el_3 -  34746 \el_1 \el_2 \el_3 + 43925 \el_3^2 ), 
%\]
%\[
%f_{12,+}^{(2)}= \frac{1}{4096}(2 \el_1^6 - 78 \el_1^4 \el_2 + 747 \el_1^2 \el_2^2 - 1906 \el_2^3 - 672 \el_1^3 \el_3 +  2934 \el_1 \el_2 \el_3 + 189 \el_3^2),
%\]

$l=15,18,21$ : omitted.

\item $m=16,18,20,22,26$ : omitted.
\end{itemize}

\bibliography{algbib}
\bibliographystyle{amsplain}

\end{document}